\def\doi#1{{\small\href{https://doi.org/#1}{\path{doi:#1}}}}
\def\arxiv#1{{\small\href{http://www.arxiv.org/abs/#1}{\path{arXiv:#1}}}}
\def\url#1{{\small\href{#1}{\path{#1}}}}
\theoremstyle{plain}
\newtheorem{theorem}{\bf Theorem}[section]
\newtheorem{proposition}[theorem]{\bf Proposition}
\newtheorem{lemma}[theorem]{\bf Lemma}
\newtheorem{corollary}[theorem]{\bf Corollary}
\newtheorem{conjecture}[theorem]{\bf Conjecture}
\newtheorem{open-problem}[theorem]{\bf Open Problem}
\theoremstyle{definition}
\newtheorem{definition}[theorem]{\bf Definition}
\newtheorem{remark}[theorem]{\bf Remark}
\newcommand{\N}{\mathbb N}
\newcommand{\Z}{\mathbb Z}
\newcommand{\R}{\mathbb R}
\newcommand{\Q}{\mathbb Q}
\newcommand{\C}{\mathbb C}
\newcommand{\AAP}{\text{\rm AAP }}
\newcommand{\AAMP}{\text{\rm AAMP }}
\newcommand{\BF}{\text{\rm BF}}
 \DeclareMathOperator{\ord}{ord}
 \DeclareMathOperator{\supp}{supp}
\newcommand{\red}{{\text{\rm red}}}
\renewcommand{\t}{\, | \,}
\numberwithin{equation}{section}
\begin{document}

\title{On the structure of length sets with maximal elasticity}

\author{Doniyor Yazdonov}

\address{Department of Mathematics and Scientific Computing, University of Graz, \newline 8010 Graz, Austria}
\email{doniyor.yazdonov@uni-graz.at}

\subjclass[2020]{13A05, 13F05, 20M13}

\keywords{factorization theory,  Krull monoids, length sets, elasticity}

\thanks{This work was supported by FWF, Project Number DOC 183-N}

\begin{abstract}
Let $H$ be a Krull monoid with finite class group $G$ and suppose that each class contains a prime divisor. Then every non-unit $a \in H$ has a factorization into atoms, say $a=u_1 \cdot\ldots \cdot u_k$ where $k$ is the factorization length and $u_1, \ldots, u_k$ are atoms of $H$. The set $\mathsf L (a)$ of all possible factorizaton lengths is the length set of $a$, and $\rho (H) = \sup \{ \max \mathsf L (a)/\min \mathsf L (a) \colon a \in H \}$ is the elasticity of $H$. We study the structure of length sets of elements with maximal elasticity and show that, in general,  these length sets are intervals.  
\end{abstract}

\maketitle

\section{Introduction}
Let $H$ be a Krull monoid or a Krull domain with class group $G$ and suppose that each class contains a prime divisor. Then each (non-zero and non-invertible) element $a \in H$ has a factorization into atoms, say $a = u_1 \cdot \ldots \cdot u_k$, where $k$ is the factorization length and $u_1, \ldots, u_k$ are atoms (irreducible elements) of $H$. The set $\mathsf L (a) \subset \N$ of all possible factorization lengths $k$ is called the \textit{length set} of $a$. Length sets and all parameters controlling their structure (such as elasticities and sets of distances) are the best investigated invariants in factorization theory (for a survey, we refer to \cite{Sc16a}). To begin with the extreme cases, we have $|\mathsf L (a)|=1$ for all relevant $a \in H$ if and only if $|G| \le 2$. If $G$ is infinite, then every finite nonempty subset of $\N_{\ge 2}$
occurs as a length set of some $a \in H$. Now suppose that $G$ is finite with $|G| \ge 3$. Then the length sets can be arbitrarily large, but they are all finite and well-structured (Proposition \ref{xy5.3}). Roughly speaking, they are sorts of generalized arithmetic progressions. In addition, it is known that almost all length sets are intervals (Proposition \ref{3.0}).

For a finite nonempty set $L \subset \N$, we denote by $\rho (L) = \max L /\min L$ the elasticity of $L$, and by $\rho (H)$, defined as the supremum of $\rho \big( \mathsf L (a) \big)$ over all $a$ of $H$,  the elasticity of $H$. It is well-known that $\rho (H) = \mathsf D (G)/2$, where $\mathsf D (G)$ denotes the Davenport constant of $G$ (see  Subsection \ref{zero-sum-sequences}). Recent research has seen much interest in the structure of length sets with maximal elasticity. It has turned out that these length sets have a much simpler structure than length sets in general (compare Propositions \ref{xy5.3} and \ref{2.3}).

We continue the investigation of length sets with maximal elasticity and proceed in the more general setting of transfer Krull monoids (see Subsection \ref{transferkrull}). In analogy to the results on general length sets, we show that almost all length sets with maximal elasticity are intervals. We formulate the first of our main results.

\newpage
\begin{theorem} \label{1.1}
Let $H$ be a transfer Krull monoid over a finite abelian group $G$ with $|G| \geq 3$. Suppose that $G$ has the following Property {\bf P}.
\begin{itemize}
\item[] {\bf P.}  There are $g_1, g_2 \in G$ and minimal zero-sum sequences $U_1, U_2$ over $G$ such that
                  \[
                  |U_1| = |U_2| = \mathsf D(G),\hspace{0.1cm} g_1g_2 \hspace{0.1cm}| \hspace{0.1cm}U_1, \hspace{0.2cm} \text{and } \hspace{0.1cm}(g_1 + g_2)\hspace{0.1cm} |\hspace{0.1cm} U_2 \,.
                  \]
\end{itemize}
Then there exists some
$a^* \in H$ such that for all elements 
$a \in H$ with $\rho(\mathsf L(a)) = \rho(H)$, the length
set  $\mathsf L(a^*a)$ is an interval with elasticity $\rho(H)$.
\end{theorem}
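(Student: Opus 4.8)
The plan is to pass to the monoid of zero-sum sequences and to build a single max-elastic ``gap-filler'' $a^*$ whose length set is a genuine interval, so long that the set-addition $\mathsf L(a^*)+\mathsf L(a)$ bridges every gap of $\mathsf L(a)$ at once. First I would invoke the transfer property (Subsection \ref{transferkrull}) to reduce to $H=\mathcal B(G)$, the monoid of zero-sum sequences over $G$, so that an element is a zero-sum sequence and a factorization is a factorization into minimal zero-sum sequences. Throughout I use the elementary length bounds $|a|/\mathsf D(G)\le \min\mathsf L(a)$ and $\max\mathsf L(a)\le |a|/2$; hence $\rho(\mathsf L(a))=\rho(H)=\mathsf D(G)/2$ forces $\min\mathsf L(a)=|a|/\mathsf D(G)$ and $\max\mathsf L(a)=|a|/2$, i.e. $a$ has both a factorization into atoms of length $\mathsf D(G)$ and one into atoms of length $2$.

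Next I construct $a^*$. Set $E:=U_1\cdot(-U_1)\cdot U_2\cdot(-U_2)$, where $-U_i$ denotes the sequence of negatives. Since $|E|=4\mathsf D(G)$, the displayed four-atom factorization is shortest, and pairing each element with its negative gives a factorization into $2\mathsf D(G)$ atoms of length $2$; thus $E$ is max-elastic with $\min\mathsf L(E)=4$ and $\max\mathsf L(E)=2\mathsf D(G)$. The heart of the matter is that Property {\bf P} injects the distance $1$ into $\mathsf L(E)$: because $g_1g_2\mid U_1$ and $(g_1+g_2)\mid U_2$, the six elements $\pm g_1,\pm g_2,\pm(g_1+g_2)$ all divide $E$, and since $U_1$ is an atom of length $\ge 3$ the elements $g_1,g_2,g_1+g_2$ are nonzero, so $V:=g_1\cdot g_2\cdot(-(g_1+g_2))$ and $-V$ are atoms of length $3$. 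Replacing, inside the length-$2\mathsf D(G)$ factorization, the three atoms $(g_1(-g_1))$, $(g_2(-g_2))$, $((g_1+g_2)(-(g_1+g_2)))$ by the two atoms $V,-V$ produces a factorization of length $2\mathsf D(G)-1$, so $\{2\mathsf D(G)-1,\,2\mathsf D(G)\}\subseteq\mathsf L(E)$.

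Because $\mathsf L(E)$ contains its two extreme values together with two consecutive values, an elementary set-addition argument shows that a high power $a^*:=E^m$ satisfies $\mathsf L(a^*)=[\,4m,\,2\mathsf D(G)m\,]$, a genuine interval, and $a^*$ is again max-elastic. Letting $\max\Delta(G)$ denote the (finite) maximal distance occurring in $G$, I choose $m$ with $(2\mathsf D(G)-4)m\ge \max\Delta(G)$, so that this interval is longer than the largest possible gap. Now for any max-elastic $a$, every gap of $\mathsf L(a)$ is at most $\max\Delta(G)$, whence $\mathsf L(a^*a)\supseteq\mathsf L(a^*)+\mathsf L(a)=[\min\mathsf L(a^*)+\min\mathsf L(a),\ \max\mathsf L(a^*)+\max\mathsf L(a)]$. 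As $a$ and $a^*$ are both max-elastic, the two length bounds sandwich these endpoints to $|a^*a|/\mathsf D(G)=\min\mathsf L(a^*a)$ and $|a^*a|/2=\max\mathsf L(a^*a)$; hence $\mathsf L(a^*a)$ equals this interval and $\rho(\mathsf L(a^*a))=\mathsf D(G)/2=\rho(H)$.

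I expect the main obstacle to be the distance-$1$ claim in the second paragraph: one must check carefully that $V$ and $-V$ really are atoms and that the substitution inside the maximal factorization is legitimate. This is precisely the place where Property {\bf P} is indispensable, since it is the simultaneous divisibilities $g_1g_2\mid U_1$ and $(g_1+g_2)\mid U_2$ that make the six relevant elements available in $E$ and turn the ``merge'' $g_1g_2\rightsquigarrow (g_1+g_2)$ into a length-changing regrouping. Everything else is set-addition together with the two universal length bounds, so I anticipate those steps to be routine once the gadget $E$ and its distance-$1$ factorization are in place.
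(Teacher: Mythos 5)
Your construction coincides with the paper's (the gadget $E=U_1(-U_1)U_2(-U_2)$, the merge $g_1\cdot g_2\rightsquigarrow (g_1+g_2)$ giving $\max\mathsf L(E)-1\in\mathsf L(E)$, then a high power and a sumset against $\max\Delta(G)$), but there is a genuine gap at the step ``a high power $a^*=E^m$ satisfies $\mathsf L(a^*)=[4m,2\mathsf D(G)m]$.'' What you have actually established about $\mathsf L(E)$ is only $\{4\}\cup\{2\mathsf D(G)-1,\,2\mathsf D(G)\}\subset\mathsf L(E)$: a consecutive pair at the top, but nothing adjacent to the minimum. The $m$-fold sumset of $\{k\}\cup\{l-1,l\}$ with $k=4$ and $l=2\mathsf D(G)$ is never an interval when $\mathsf D(G)\ge 4$: its smallest element is $km$ and its second smallest is $(m-1)k+(l-1)=km+(l-k-1)$, so the values $km+1,\dots,km+(l-k-2)$ are missing for every $m$. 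Hence no elementary set-addition argument yields the interval claim from the data you have, and the same hole propagates to $\mathsf L(a^*a)$ just above its minimum, so the stated conclusion is not proved. This is not a cosmetic point: for cyclic groups of even order a max-elastic $A$ never has $\max\mathsf L(A)-1\in\mathsf L(A)$ and, at least for small orders, never has $\min\mathsf L(A)+1\in\mathsf L(A)$ either (Theorem \ref{secondmaintheorem} and Remark \ref{5.4}), so the two consecutive pairs are independent facts and both genuinely require Property {\bf P}.

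The missing half is exactly the paper's other use of Property {\bf P}. Writing $U_1=g_1g_2g_3\cdots g_l$ and $U_2=(g_1+g_2)h_2\cdots h_l$ with $l=\mathsf D(G)$, one checks that $W:=(g_1+g_2)g_3\cdots g_l$ is an atom (a proper zero-sum subsequence would, after replacing $g_1+g_2$ by $g_1g_2$ if it occurs, contradict the minimality of $U_1$), while $W^{-1}U_1U_2=g_1g_2h_2\cdots h_l$ has length $l+1>\mathsf D(G)$ and contains the zero-sumfree subsequence $h_2\cdots h_l$, hence factors into exactly two atoms. Thus $U_1U_2$ is a product of three atoms, and together with the two-atom factorization $(-U_1)(-U_2)$ this gives $5=\min\mathsf L(E)+1\in\mathsf L(E)$. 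Once both pairs $\{4,5\}$ and $\{2\mathsf D(G)-1,2\mathsf D(G)\}$ are in hand, the $(2\mathsf D(G)-4)$-fold sumset of $\mathsf L(E)$ is an interval with the correct endpoints (this is Lemma \ref{rkbjer}, via Lemma \ref{lemmainitial}), and the remainder of your argument — the reduction to $\mathcal B(G)$, the verification that $V$ and $-V$ are atoms, and the gap-filling estimate against $\max\Delta(G)$ — is correct and matches the paper.
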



In general, length sets $\mathsf L (a)$ with elasticity $\rho (H)$ need not be intervals (in other words, the above statement does not hold for $a^*=1$; see Proposition \ref{2.3}). The proof of Theorem \ref{1.1} will be given in Section \ref{3}. We provide  two corollaries stating that almost all (in a combinatorial sense) length sets with maximal elasticity are intervals (Corollaries \ref{3.6} and \ref{density}).  Then, in Section \ref{4}, we study Property {\bf P}. Theorem \ref{4.2} shows that Property {\bf P} is satisfied by cyclic groups of odd order, by groups of rank two, by $p$-groups, and others. The only known groups, for which Property {\bf P} does not hold, are cyclic groups of even order. Our second main result shows that also the statement of Theorem \ref{1.1} does not hold for them. But, their length sets still satisfy a quite strong structural property.

\begin{theorem}\label{secondmaintheorem}
Let $H$ be a transfer Krull monoid over a cyclic group $G$ of even order $|G| \ge 4$. 
\begin{enumerate}
\item  There is no element $a\in H$ with maximal elasticity such that $\max\mathsf L(a)-1\in \mathsf L(a)$.

\item If $|G|+1\notin \mathbb{P}$, then there exist $a^*\in H$ and $M\in \N_0$ such that for all $a\in H$ with $\rho(\mathsf L(a))=\rho(H)$,  $\mathsf L(a^*a)\cap [\min\mathsf L(a^*a), \max\mathsf L(a^*a)-M]$ is an interval and $\rho ( \mathsf L (a^*a))$ has elasticity $\rho (H)$.
\end{enumerate}    
\end{theorem}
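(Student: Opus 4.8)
The plan is to pass to the associated monoid of zero-sum sequences and to argue entirely there. Since $H$ is a transfer Krull monoid over $G = \Z_n$, its system of length sets coincides with that of the block monoid $\mathcal B(G)$, so I may replace $a$ and $a^*$ by zero-sum sequences $B, B^*\in\mathcal B(G)$ and study $\mathsf L(B)$. The starting point is the description of sequences of maximal elasticity. Since every atom has length at most $\mathsf D(G)=n$ and at least $2$, one has $\max\mathsf L(B)\le|B|/2$ and $\min\mathsf L(B)\ge|B|/n$; hence $\rho(\mathsf L(B))=n/2$ forces $|B|=n\min\mathsf L(B)$, a factorization into atoms of length $n$ only, and a factorization into atoms of length $2$ only. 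As the atoms of length $n=\mathsf D(\Z_n)$ are exactly the sequences $c^{[n]}$ with $c$ a generator, and the atoms of length $2$ are the pairs $c(-c)$, every $B$ with $\rho(\mathsf L(B))=\rho(H)$ has the form $B=\prod_c c^{[na_c]}$, where $c$ runs over the generators of $\Z_n$ and $a_c=a_{-c}$ (balancedness); this is what we read off from Proposition~\ref{2.3}.

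For part (1) I would use a parity observation. Since $n$ is even, reduction modulo $2$ is a homomorphism $\Z_n\to\Z_2$ sending every generator to $1$; as $B$ is supported on generators, every atom $U\mid B$ satisfies $|U|\equiv 0\pmod 2$, i.e.\ has even length. Writing $M=\max\mathsf L(B)=|B|/2$, a factorization of length $M-1$ would consist of $M-1$ atoms with $\sum_U(|U|-2)=|B|-2(M-1)=2$. Each summand is a non-negative even integer, so exactly one atom $U$ has length $4$ and all others have length $2$. The length-$2$ atoms are pairs $c(-c)$, so $B/U$ is balanced; as $B$ is balanced, $U$ is balanced as well, and a balanced sequence of length $4$ splits off a pair $c(-c)$, contradicting that $U$ is an atom. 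Hence $M-1\notin\mathsf L(B)$.

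For part (2) the strategy is to fix one interval-filling factor $B^*$ and to exploit the sumset inclusion $\mathsf L(B^*B)\supseteq\mathsf L(B^*)+\mathsf L(B)$. A direct computation shows that for a single pair $\mathsf L\big(c^{[na]}(-c)^{[na]}\big)$ is the arithmetic progression $\{2a+j(n-2):0\le j\le a\}$ of difference $n-2$, and the structure theorem bounds the set of distances of an arbitrary maximal-elasticity length set. The hypothesis $n+1\notin\mathbb P$ is used to manufacture a short ``mixing'' atom: writing $n+1=st$ with $2\le s,t$, the element $s$ is a unit of $\Z_n$ with inverse $t$, so $s^{[t]}(-1)$ is an atom of length $t+1$ with $2<t+1<n$. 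Embedding $s,-s,1,-1$ into a fixed balanced block $B^*$, this atom produces interior factorizations whose lengths are not congruent to the endpoints modulo $n-2$, and taking a suitable power as $B^*$ lets the sumset fill every gap below the top. Finally $\rho(\mathsf L(B^*B))=\rho(H)$ holds because $B^*$ is itself of maximal elasticity, so $B^*B$ is again a balanced product of generator powers.

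The main obstacle is the two-sided control in part (2): one must produce a single $B^*$ and a single constant $M$ that work simultaneously for all maximal-elasticity $B$, and one must show that the non-interval behaviour is confined to the top $M$ lengths. The lower part is handled by the sumset argument once the mixing atom guarantees the right interior distances, but near the maximum the same rigidity used in part (1) — that top factorizations are forced to use length-$2$ atoms and hence inherit the gap of size $n-2$ — obstructs filling, and quantifying exactly how far this rigidity reaches, uniformly in $B$, is the delicate point.
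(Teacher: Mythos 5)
Part 1 of your proposal is correct and is essentially the paper's argument: the paper likewise reduces to $\mathcal B(G)$, shows (its Lemma \ref{Lemma342}) that every maximal-elasticity block is $\prod_{g}\bigl(g^n(-g)^n\bigr)^{n_g}$ supported on generators, and excludes a factorization of length $\max\mathsf L(A)-1$ by the same counting $\sum(|V_i|-2)=2$, splitting into the length-$4$ and length-$3$ cases. Your reduction mod $2$ is a slightly cleaner, uniform way to kill all odd-length atoms (the paper instead notes that $a+b+c$ is odd while $n\mid a+b+c$), and deriving balancedness directly from the all-length-$2$ factorization is fine; only the attribution to Proposition \ref{2.3} is off --- the structural description comes from Lemma \ref{l3.2} together with the inverse theorem that minimal zero-sum sequences of length $\mathsf D(C_n)=n$ are $g^n$ with $g$ a generator.

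Part 2 follows the paper's strategy (an auxiliary block built from the compositeness of $n+1$, powers, the sumset inclusion, and the Structure Theorem), but it has two genuine gaps, both of which you flag without closing. First, the mere existence of the atom consisting of $t$ copies of $s$ and one copy of $-1$ does not yet yield an extra factorization length near the minimum: you must exhibit a concrete factorization and verify that every factor is an atom and that the complement splits into exactly the right number of atoms. The paper does this explicitly: with $n+1=ab$, $a,b\ge 3$, it writes $g^n(ag)^n=\bigl(g(ag)^{n-b}\bigr)\bigl(g^{n-a}(ag)\bigr)\bigl(g^{a-1}(ag)^{b-1}\bigr)$, whence $(-A')A'$ with $A'=g^n(ag)^n$ satisfies $\min\mathsf L\bigl((-A')A'\bigr)+1\in\mathsf L\bigl((-A')A'\bigr)$; your candidate atom leaves a complement whose minimal factorization length is not controlled. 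Second, the uniform ``top $M$'' control is not a residual delicacy but exactly the place where Proposition \ref{xy5.3} must be invoked: it provides a single bound $M$ such that every $L\in\mathcal L(G)$ is an AAMP with bound $M$, and choosing $A^*=\bigl((-A')A'\bigr)^{l}$ with $l=M+\max\Delta(G)$ forces $\mathsf L(A^*A)\supset\mathsf L(A^*)+\mathsf L(A)$ to contain an interval of length $l$ starting at $\min\mathsf L(A^*A)$; an AAMP with bound $M$ containing such an initial interval must have difference $1$ on its central part, which is precisely the assertion that $\mathsf L(A^*A)\cap[\min\mathsf L(A^*A),\max\mathsf L(A^*A)-M]$ is an interval. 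Until these two steps are supplied, the proof of Part 2 is incomplete.
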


 The statement of Theorem \ref{secondmaintheorem}.2 does not hold if $|G|+1$ is a prime smaller than or equal to $11$, see Remark \ref{5.4}.

\section{Preliminaries}
We denote by $\N, \N_0,$ and $ \mathbb{P}$ the sets of positive integers, non-negative integers, and  prime numbers, respectively. For real numbers $a,b\in \R$, $[a,b]=\{x\in \Z:a\leq x\leq b\}$ means the discrete interval between $a$ and $b$. Let $L, L' \subset \Z$ be subsets. Then $L+L'=\{a+b:a\in L,\hspace{0.1cm} b\in L'\}$ denotes their {\it sumset}. The {\it set of distances} $\Delta(L) \subset \N$ is the set of all $d\in \N$ for which there is $a\in L$ such that $L\cap[a,a+d]=\{a,a+d\}$. 
If $L' \subset L$ is finite, then $L'$ is said to be an {\it interval of $L$} if $L' \ne \emptyset$ and $L' = L \cap [\min L', \max L']$. By an {\it interval}, we mean an interval of $\Z$, whence an interval is an arithmetic progression with difference $1$.

Let $G$ be an additively written finite abelian group.  Let $r\in \N$ and $(e_1,e_2,\ldots,e_r)$ be an $r$-tuple of non-zero elements of $G$. Then $(e_1,e_2,\ldots,e_r)$ is said to be \textit{independent} if for all $(m_1,m_2,\ldots,m_r)\in \Z^r$ an equation $m_1e_1+m_2e_2+\ldots+e_rm_r=0$ implies $m_ie_i=0$ for all $i\in [1,r]$. An independent $r$-tuple of non-zero elements $(e_1,e_2,\ldots,e_r)$ is said to be a \textit{basis} if $G=\langle e_1 \rangle \oplus\langle e_2 \rangle \oplus \ldots\oplus\langle e_r \rangle $. For $n\in \N$, we let $C_n$ denote a cyclic group of order $n$. 

\subsection{Monoids and their Arithmetic.} Throughout this paper, by a monoid, we mean a commutative cancellative semigroup with an identity. Let $H$ be a multiplicatively written monoid. We denote by $H^{\times}$ the group of invertible elements of $H$, and we denote $H_{\textup{red}}=\{aH^{\times}:a\in H\}$ the associated reduced monoid of $H$. For two elements $a,b\in H$, we say $a$ \textit{divides} $b$, written $a|b$, if $b\in aH$. The elements $a$ and $b$ are called \textit{associated}, written $a\simeq b$, if $aH=bH$. The element $a$ is called an \textit{atom} if $a=bc$ with $b,c\in H$ implies $b\in H^{\times}$ or $c\in H^{\times}$. The set of atoms of $H$ is denoted by $\mathcal{A}(H)$, and $H$ is said to be \textit{atomic} if every non-unit element of $H$ is a finite product of atoms. For a set $P$, we let $\mathcal{F}(P)$ denote the free abelian monoid with basis $P$. Every $a\in \mathcal{F}(P)$ has a unique representation in the form 
\[a=\prod_{p\in P}p^{\mathsf v_p(a)}\] where $\mathsf v_p: H\longrightarrow \N_0$ is the $p$-adic valuation of $a$. We call $|a|=\sum_{p\in P}\mathsf v_p(a)$ the \textit{length} of $a$ and $\textup{supp}(a)=\{p\in P: \mathsf v_p(a)>0\}$ the \textit{support} of $a$. 

The free abelian monoid $\mathsf Z(H)=\mathcal{F}(\mathcal{A}(H_{\textup{red}}))$ is called the \textit{factorization monoid} of $H$ and the map $\pi:\mathsf Z(H)\longrightarrow H_{\textup{red}}$, defined by $\pi(u)=u$ for all $u\in \mathcal{A}(H_{\textup{red}})$, is called the \textit{factorization homomorphism} of $H$. Let $a\in H$. Then $\mathsf Z(a)=\pi^{-1}(aH^{\times})\subset \mathsf Z(H)$ is the \textit{set of factorizations} of $a$,  $\mathsf L(a)=\{|a|:z\in \mathsf Z(a)\}\subset \N_0$ is the \textit{length set} of $a$, and 
$\mathcal{L}(H)=\{\mathsf L(a):a\in H\}$ is the \textit{system of  length sets} of $H$. By definition, the monoid $H$ is atomic if $\mathsf Z(a)\neq \emptyset$ for all $a\in H$. The monoid $H$ is said to be a \textit{\BF-monoid}, if $\mathsf L(a)$ is finite and nonempty for all $a\in H$, and we call  $H$  \textit{half-factorial} if $|\mathsf L(a)|=1$ for all $a\in H$. 

Suppose that $H$ is a \BF-monoid. To describe the structure of  $\mathcal{L}(H)$, we define the following  invariants. We denote by $\Delta(H)=\bigcup_{L\in \mathcal{L}(H)}\Delta(L)$ the \textit{set of distances} of $H$, and by $\rho(H)=\textup{sup }\{\rho(L): L\in\mathcal{L}(H)\}$ the \textit{elasticity} of $H$, and lastly, we call $\rho_k(H)=\textup{sup }\bigcup_{k\in L\in \mathcal{L}(H)}L$ the \textit{$k$-th elasticity} of $H$. We say that $H$ has $\textit{accepted elasticity}$ if $\rho(H)=\rho(\mathsf L(a))$ for some $a\in H$.

Let $H$ be an atomic monoid and suppose that $x,y\in \mathsf Z(H)$ and  $z=\textup{gcd}(x,y)$. Then we call $\mathsf{d}(x,y)=\textup{max}\{|z^{-1}x|,|z^{-1}y|\}$ the \textit{distance} between $x$ and $y$. Explicitly, if $x=u_1\cdot \ldots \cdot u_nw_1\cdot\ldots\cdot w_r$ and $y=v_1\cdot \ldots \cdot v_mw_1\cdot\ldots\cdot w_r$ with $n,m,r\in \N_0,$ \vspace{0.1cm} $u_1,\ldots, u_n, v_1,\ldots, v_m, w_1,\ldots, w_r\in \mathcal{A}(H_{\textup{red}})$ and $\{u_1,\ldots, u_n \}\cap \{v_1,\ldots, v_m\}=\emptyset$, then $\mathsf d(x,y)=\textup{max}\{n,m\}$. The distance function $\mathsf{d}: \mathsf Z(H)\times\mathsf Z(H)\longrightarrow \N_0$ is a metric. We define the \textit{catenary degree} $\mathsf c(a)$ for $a\in H$ to be the smallest $N\in \N_0\cup\{\infty\}$ such that, for any two factorizations $z,z'\in \mathsf Z(a)$, there exists a finite sequence $z=z_0,z_1,\ldots,z_k=z'\in \mathsf Z (a)$ satisfying that $\mathsf d(z_{i-1},z_i)\leq N$ for all $i\in [1,k]$. If  $z ,\,z' \in \mathsf Z (a)$ and $z \ne z'$, then 
\[
2 + \bigl| |z |-|z'| \bigr| \le \mathsf d (z, z') \,.
\]
If  $|\mathsf Z (a)| \ge 2$, then 
\begin{equation} \label{catenary-distance}
2 + \sup \Delta (\mathsf L(a)) \le \mathsf c (a) \,,
\end{equation}
whence $\mathsf c (a) \le 3$ implies that $\mathsf L (a)$ is an interval.
Furthermore, we define the \textit{catenary degree } $\mathsf c(H)$ of $H$ as  
\[
\mathsf c(H)=\textup{sup}\{\mathsf c(a): a\in H\}\in \N_0\cup \{\infty\} .
\]

\subsection{Krull Monoids} \label{zero-sum-sequences}
A monoid homomorphism $\varphi \colon H \to D$ is called a {\it divisor homomorphism} if, for all $a, b  \in H$, $\varphi (a) \t \varphi (b)$ (in $D$) implies that $a\t b$ (in $H$). A submonoid $S\subset H$ is called  \textit{saturated} if the inclusion $S \hookrightarrow H$ is a divisor homomorphism.
A monoid $H$ is a {\it Krull monoid} if one of the following equivalent conditions is satisfied (see \cite[Chapter 2]{Ge-HK06a}).
\begin{itemize}
\item[(a)] $H$ is completely integrally closed and satisfies the ascending chain condition on divisorial ideals.
\item[(b)] There is a divisor homomorphism $\varphi \colon H \to D$, where $D$ is a free abelian monoid, such that for every $ \alpha \in D$ there exists a  nonempty finite set $A\subset H$ with $\alpha = \gcd \varphi (A)$.
\item[(c)] There is a divisor homomorphism from $H$ into a free abelian monoid.
\end{itemize}
If $D$ is a domain, then $D$ is a Krull domain if and only if its monoid of nonzero elements is a Krull monoid. Condition (a) shows that every integrally closed noetherian domain is Krull. Let $H$ be a Krull monoid. Then there exists a free abelian monoid $F=\mathcal{F}(P)$ such that $H_{\textup{red}}\subset F$ is a submonoid, and the inclusion map $H_{\textup{red}}\hookrightarrow F$ is a divisor homomorphism that satisfies Condition (b). In this case, $F$ and $P$ are  uniquely determined by $H$ (up to isomorphism). Consequently, the group $$\mathcal{C}(H)=\mathsf q(F)/\mathsf q(H_{\textup{red}}),$$ where $\mathsf q(F)$ and $ \mathsf q(H_{\textup{red}})$ are the quotient groups of $F$ and $H_{\textup{red}}$, respectively, depends only on $H$ and is called the \textit{class group} of $H$. For background and further examples of Krull monoids, we refer to \cite{HK98, Ge-HK06a}.

Next, we introduce a Krull monoid having a combinatorial flavor, which plays a central role when studying the arithmetic of general Krull monoids. 

Let $G$ be a finite abelian group, and let $G_0\subset G$ be a subset. Then the elements of the free abelian monoid $\mathcal{F}(G_0)$ with basis $G_0$ are called \textit{sequences} over $G_0$. For a sequence $S=g_1\cdot \ldots \cdot g_\ell\in \mathcal{F}(G_0)$, we call $\sigma(S)=\sum_{i=1}^{\ell}g_i\in G$ the \textit{sum} of $S$, $|S| = \ell$ the length of $S$, and $\supp (S) = \{g_1, \ldots, g_{\ell} \} \subset G$ the {\it support} of $S$.  If $\varphi \colon G \to G'$ is a map from $G$ to a group $G'$, we set $\varphi (S) = \varphi (g_1) \cdot \ldots \cdot \varphi (g_l)$, and we define $-S = (-g_1) \cdot \ldots \cdot  (-g_l)$. The sequence $S$ is called 
\begin{itemize}
\item a {\it zero-sum sequence} if $\sigma (S)=0$, 

\item  \textit{zero-sumfree} if $\sum_{i \in I} g_i \ne 0$ for all $\emptyset \ne I \subset [1, \ell]$, and

\item  \textit{squarefree} if $|S|= |\supp (S)|$.
\end{itemize}
Furthermore, $\mathcal{B}(G_0)=\{S\in \mathcal{F}(G_0): \sigma(S)=0\}$ is the \textit{monoid of zero-sum sequences} over $G_0$. The monoid $\mathcal{B}(G_0)$ is viewed as a submonoid of $\mathcal{F}(G_0)$. $\mathcal{B}(G_0)$ is a finitely generated monoid, and since the inclusion $\mathcal B (G_0) \hookrightarrow \mathcal F (G_0)$ is a divisor homomorphism, Condition (c) implies that $\mathcal B (G_0)$ is a Krull monoid.   We denote $\mathcal{A}(G_0)$ the set of atoms of $\mathcal{B}(G_0)$, and we call the elements of $\mathcal{A}(G_0)$  the \textit{minimal zero-sum sequences} over $G_0$. We set \[
\mathcal{L}(G_0):=\mathcal{L}(\mathcal{B}(G_0)),\hspace{0.1cm} \Delta(G_0):=\Delta(\mathcal{B}((G_0)),\hspace{0.1cm} \rho(G_0):=\rho(\mathcal{B}(G_0))\textup{ and } \rho_k(G_0):=\rho_k(\mathcal{B}(G_0)) \,.
\]
Lastly, the {\it Davenport constant} of  $G$ is defined by 
\begin{equation*} \label{def-Davenport}
\mathsf D (G) = \max \{ |U| \colon U \in \mathcal A (G) \} \in \N.
\end{equation*}
 It is known that $\rho (\mathcal{B}(G))=\mathsf D(G)/2$ for all finite abelian groups $G$.

\medskip
\subsection{Transfer Homomorphisms and Transfer Krull Monoids.}\label{transferkrull} 
We start with the definition of transfer homomorphisms.
\begin{definition} 
 Let $H$ and $B$ be atomic monoids. Then a monoid homomorphism $\theta:H\longrightarrow B$ is called a \textit{transfer homomorphism} if the following properties are satisfied.\\
 1) $B=\theta(H)B^{\times}$ and $\theta^{-1}(B^{\times})=H^{\times}$.\\
 2) If $u\in H, \hspace{0.1cm} b,c\in B$ and $\theta(u)=bc$, then there exist $v,w\in H$ such that $u=vw, \theta(v)\simeq b$ and $\theta(w)\simeq c$.
\end{definition}

A transfer homomorphism $\theta:H\longrightarrow B$ allows us to pull back arithmetic properties from $B$ to $H$ (\cite[Proposition 3.2.3]{Ge-HK06a}). In particular, we have
\begin{equation} \label{eqtransfer}
\mathcal L (H) = \mathcal L (B) , \ \text{whence} \ \rho (H) = \rho (B) \ \text{and} \ \Delta (H) = \Delta (B) \,.
\end{equation}
A monoid $H$ (respectively, a domain $D$) is said to be  \textit{transfer Krull monoid over $G$} (respectively, a \textit{transfer Krull domain}) if  there exists a transfer homomorphism $\theta:H\longrightarrow \mathcal{B}(G)$ (respectively, $\theta:D\setminus\{0\}\longrightarrow \mathcal{B}(G)$).   It is a classic result that  a Krull monoid with class group $G$, which has a prime divisor in each class, is  a transfer Krull monoid over $G$ (\cite[Theorem 3.4.10]{Ge-HK06a}). Deep results of the last decade found  various classes of rings and monoids that are transfer Krull but fail to be Krull. They include non-commutative Dedekind domains, non-principal orders in Dedekind domains, and others (e.g., \cite{Sm13a, Ra25b, Ba-Re22a, Ba-Po25a}, and the survey \cite{Ge-Zh20a}). 

\subsection{Length Sets with Maximal Elasticity}\label{LSWME}
Our goal is to understand the structure of length sets with maximal elasticity. We begin by recalling the Structure Theorem for Length Sets (this result will be needed later on in the proof of Theorem \ref{secondmaintheorem}). To do so, we need the concepts of AAMPs (almost arithmetic multiprogressions) and of AAPs (almost arithmetic progressions). 

Let $\ell, d \in \N$, let $M\in  \N_0$, and let $\mathcal D\subset \Z$ be a subset with  $\{0,d\} \subset \mathcal D \subset [0,d]$. 
Let $L \subset \Z$ be a finite, nonempty subset. Then $L$ is an {\it almost arithmetic multiprogression} ({\rm AAMP})  with  {\it difference}  $d$,  {\it period}  $\mathcal D$, {\it length}  $\ell$ and  {\it bound}  $M$ if
		\[
		L = y + (L' \cup L^* \cup L'') \, \subset \, y + \mathcal D + d \Z , \quad \text{where  $y \in \Z$},
		\]
		\begin{itemize}
\item  $\min L^* = 0$, $L^*$ is an interval of  $\min L^* + \mathcal D + d \Z$,   $\ell$  is maximal for the property that  $\min L^* + (\ell - 1) d \in L^*$,

\item   $L' \subset [-M, -1]$, and  $L'' \subset \max L^* + [1,M]$.
\end{itemize}
We say $L$ is an {\it almost arithmetic progression}  ({\rm AAP})  with {\it difference} $d$,  {\it length} $\ell$, and  {\it bound} $M$   if it is an {\rm AAMP} with difference $d$, period  $\{0,d\}$,  length  $\ell$, and bound  $M$.

\begin{proposition}[The Structure Theorem for Length Sets] \label{xy5.3} 
Let $H$  be a transfer Krull monoid over a finite abelian group $G$. 
There exists some $M\in \N_0$ such that every $L\in \mathcal{L}(H)$ is an \AAMP with some difference $d \in \Delta (H)$ and bound $M$.
\end{proposition}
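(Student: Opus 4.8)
The plan is to reduce the assertion to the combinatorial Krull monoid $\mathcal{B}(G)$ and then to exploit its finite generation. Since $H$ is a transfer Krull monoid over $G$, there is a transfer homomorphism $\theta \colon H \to \mathcal{B}(G)$, and \eqref{eqtransfer} gives $\mathcal{L}(H) = \mathcal{L}(\mathcal{B}(G))$ together with $\Delta(H) = \Delta(\mathcal{B}(G))$. Hence it suffices to prove the statement for $B = \mathcal{B}(G)$, which is finitely generated with finite atom set $\mathcal{A}(G) = \{U_1, \ldots, U_n\}$. The decisive advantage is that a factorization of a zero-sum sequence $S \in B$ is just a representation $S = U_1^{m_1} \bdot \ldots \bdot U_n^{m_n}$ in $\mathcal{F}(G)$: the set of factorizations is the set of non-negative integer solutions of a fixed system of linear equations, and $\mathsf{L}(S)$ is the image of this solution set under the linear length functional $(m_1, \ldots, m_n) \mapsto m_1 + \ldots + m_n$.

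I would first record that $\Delta(B)$ is finite: a finitely generated monoid has finite catenary degree $\mathsf{c}(B)$, so taking the supremum over all elements in \eqref{catenary-distance} yields $\sup \Delta(B) \le \mathsf{c}(B) - 2 < \infty$. The target is then to show that, for each $S$, there is a difference $d \in \Delta(B)$ (which may depend on $S$) such that, after removing two end-segments of uniformly bounded length $M$, the remaining central part of $\mathsf{L}(S)$ is an interval of $d\Z$ --- precisely the \AAMP structure. The guiding intuition is additivity: since the product of a factorization of $S$ with one of $T$ is a factorization of $ST$, we get the sumset inclusion $\mathsf{L}(S) + \mathsf{L}(T) \subseteq \mathsf{L}(ST)$, and iterated sumsets of a fixed finite set stabilize into unions of arithmetic progressions (Khovanskii--Nathanson type behaviour). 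This already smooths the interior of $\mathsf{L}(S^h)$ for large $h$.

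The main obstacle is \emph{uniformity}: the bound $M$ must be chosen once and for all, independently of $S$, and the periodic pattern in the interior must be forced for \emph{every} element, not merely for high powers of a single one. The additivity argument alone does not deliver this. To obtain it I would exploit finite generation more seriously, analysing the minimal relations among $U_1, \ldots, U_n$ and showing, via a pigeonhole or pumping argument on factorizations of growing length, that the transient behaviour at both ends of $\mathsf{L}(S)$ is governed by only finitely many combinatorial types; a single $M$ depending only on $G$ then suffices, and once $\max \mathsf{L}(S) - \min \mathsf{L}(S)$ exceeds a threshold, the interior becomes $d\Z$-periodic. I expect this extraction of a uniform bound and uniform periodicity from the finitely many atoms and their relations to be the genuinely hard part, whereas the reduction to $\mathcal{B}(G)$ and the finiteness of $\Delta(B)$ are routine.
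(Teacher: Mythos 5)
Your reduction to $\mathcal{B}(G)$ via the transfer homomorphism and \eqref{eqtransfer} is exactly what the paper does; at that point, however, the paper simply invokes the known Structure Theorem for Sets of Lengths of finitely generated monoids (\cite[Theorem 4.4.11]{Ge-HK06a}), whereas you set out to reprove that result and do not complete it. The step you yourself flag as ``the genuinely hard part'' --- a bound $M$ depending only on $G$, and periodicity of the central part for \emph{every} element rather than for high powers of a fixed one --- is the entire content of the theorem. The sumset inclusion $\mathsf{L}(S)+\mathsf{L}(T)\subset\mathsf{L}(ST)$ and Khovanskii--Nathanson stabilization only control $\mathsf{L}(S^h)$ for $h$ large relative to $S$, and ``a pigeonhole or pumping argument'' is a placeholder, not an argument; the actual proof in \cite[Chapter 4]{Ge-HK06a} goes through pattern ideals and tameness of finitely generated monoids and is a substantial piece of work that your sketch does not replace.

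There is also a concrete error in the target you set yourself: you describe the central part as ``an interval of $d\Z$'' and call this ``precisely the AAMP structure''. That is the AAP structure (period $\{0,d\}$). In an AAMP the central part is only an interval of $y+\mathcal{D}+d\Z$ for some period $\{0,d\}\subset\mathcal{D}\subset[0,d]$, and this extra generality is unavoidable: Schmid's realization theorem (\cite{Sc09a}, quoted in the paper right after the proposition) produces, over suitable finite abelian groups, arbitrarily long length sets whose central parts are $\mathcal{D}$-periodic with $\mathcal{D}$ not an arithmetic progression. No uniform trimming of the ends turns such a set into an AAP with a single difference, so the statement you are aiming to prove is false in general and any proof along the lines you describe must break down.
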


\begin{proof}
Since $\mathcal L (H) = \mathcal L (G)$ and  $\mathcal B (G)$ is finitely generated, this follows from \cite[Theorem 4.4.11]{Ge-HK06a}. \end{proof}

A realization theorem by Schmid shows that the above structural description is best possible (\cite{Sc09a}).
In order to describe the structure of length sets with maximal elasticity, we need  one more invariant.  Then we state some results as well as a standing conjecture (Conjecture \ref{3.2}) about length sets with maximal elasticity.
Let $\Delta_{\rho} (G)$ denote the set of all $d \in \N$ with the following property: 
      \begin{itemize}
      \item[] For every $k \in \N$, there is $L_k \in \mathcal L (G)$ which is an AAP with difference $d$, length $\ell \ge k$, and $\rho (L_k) = \rho (G)$. 
      \end{itemize}

\begin{conjecture} \label{3.2} 
Let $G$ be a finite abelian group with $|G|>4$. Then  $\Delta_{\rho} (G) = \{1\}$ if and only if $G$ is neither cyclic nor an elementary $2$-group.
\end{conjecture}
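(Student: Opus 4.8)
The plan is to prove the two implications separately, working throughout in $\mathcal B(G)$ (recall $\mathcal L(G)=\mathcal L(\mathcal B(G))$) and using the identity $\rho(G)=\mathsf D(G)/2$; write $\mathsf D=\mathsf D(G)$. For the \emph{only if} direction I would argue contrapositively and exhibit, for each cyclic group $C_n$ with $n>4$ and each elementary $2$-group $C_2^{\,r}$ with $2^r>4$, an explicit family realizing a difference $d\ge 2$. For $C_n$, fix a generator $g$ and let $U=g^{\,n}\in\mathcal F(G)$ be the minimal zero-sum sequence consisting of $n$ copies of $g$ (so $|U|=\mathsf D=n$); put $B_m=g^{\,nm}(-g)^{nm}$. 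The only minimal zero-sum sequences dividing $B_m$ are $g^{\,n}$, $(-g)^{n}$ and $g(-g)$, and a count of multiplicities shows that a factorization using the long atom $s$ times on each side has exactly $nm-(n-2)s$ atoms with $0\le s\le m$. Hence $\mathsf L(B_m)=\{\,2m+j(n-2):0\le j\le m\,\}$ is an arithmetic progression of difference $n-2$, length $m+1$, and elasticity $nm/(2m)=n/2=\rho(G)$. Letting $m\to\infty$ gives $n-2\in\Delta_{\rho}(C_n)$, and $n-2\ge 3$.

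For $C_2^{\,r}$ the same scheme applies with $U=e_1\cdots e_r\,(e_1+\cdots+e_r)$ and $B_m=U^{2m}=\prod_i e_i^{\,2m}\,(e_1+\cdots+e_r)^{2m}$. The atoms dividing $B_m$ are $e_i^{\,2}$, $(e_1+\cdots+e_r)^2$ and $U$, and since $U$ must be used an even number of times, the reachable lengths are $(r+1)m-j(r-1)$ for $0\le j\le m$. Thus $\mathsf L(B_m)$ is an arithmetic progression of difference $r-1$ and elasticity $(r+1)/2=\rho(G)$, so $r-1\in\Delta_{\rho}(C_2^{\,r})$, and $2^r>4$ forces $r-1\ge 2$. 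In both families the hypothesis $|G|>4$ is exactly what pushes the difference above $1$, so $\Delta_{\rho}(G)\neq\{1\}$ in these cases, which proves the \emph{only if} implication.

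For the \emph{if} direction, assume $G$ is neither cyclic nor an elementary $2$-group and $|G|>4$, and prove $\Delta_{\rho}(G)=\{1\}$. I would split this into $1\in\Delta_{\rho}(G)$ and the exclusion of every $d\ge 2$. The first claim I would deduce from Property {\bf P}: by Theorem~\ref{4.2} the relevant groups (rank-two groups, $p$-groups, and the remaining classes covered there) satisfy {\bf P}, so Theorem~\ref{1.1} applies to the maximal-elasticity elements $B_m$ constructed above, giving some $a^*$ for which each $\mathsf L(a^*B_m)$ is an interval of elasticity $\rho(G)$; since $\min\mathsf L(a^*B_m)\to\infty$, these intervals have unbounded length, whence $1\in\Delta_{\rho}(G)$. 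The substantial claim is that no $d\ge 2$ lies in $\Delta_{\rho}(G)$, i.e.\ every sufficiently long AAP in $\mathcal L(G)$ of elasticity $\rho(G)$ has difference $1$. To approach it I would first pin down the shape of a maximal-elasticity element: if $\rho(\mathsf L(B))=\mathsf D/2$ with $\min\mathsf L(B)=k$, then $\max\mathsf L(B)=k\mathsf D/2$ and $|B|=k\mathsf D$ force the shortest factorization to use $k$ atoms each of maximal length $\mathsf D$ and the longest to use $k\mathsf D/2$ atoms each of length $2$; thus $B$ is simultaneously a product of $k$ maximal atoms and a product of length-two atoms (equivalently $B=C\cdot(-C)$ after pairing inverse elements and squaring order-two elements). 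Intermediate lengths then arise from exchange moves trading blocks of long atoms for blocks of length-two atoms, and the difference of $\mathsf L(B)$ is the greatest common divisor of the length-changes of such moves. The coarse move, replacing $V\cdot(-V)$ by the $\mathsf D$ inverse pairs it contains, changes the length by $\mathsf D-2$ and is precisely what produced the large difference in the cyclic and elementary-$2$ families; the crux is to show that when $G$ is neither cyclic nor elementary-$2$, the richer additive structure always supplies a finer, difference-one exchange inside any sufficiently long maximal-elasticity element, collapsing the overall difference to $1$.

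The main obstacle is exactly this last step. Ruling out every $d\ge 2$ demands a uniform structural classification of \emph{all} maximal-elasticity elements in non-cyclic, non-elementary-$2$ groups, together with a proof that their length sets are eventually intervals — one must exclude not merely the coarse move but every conceivable family realizing a difference $\ge 2$ over unboundedly long length sets, and the $1\in\Delta_\rho(G)$ half itself rests on Property {\bf P}, which is presently known only for the classes in Theorem~\ref{4.2}. I expect this to be the genuinely hard part and the reason the statement is recorded as a conjecture: the explicit constructions for the cyclic and elementary-$2$ groups are comparatively routine, whereas the collapse to difference $1$ for all remaining groups is not.
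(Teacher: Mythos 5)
The statement you are asked to prove is labelled a \emph{conjecture} in the paper: the paper gives no proof of it, remarking only that the direction ``cyclic or elementary $2$-group $\Rightarrow \Delta_{\rho}(G)\neq\{1\}$'' is easy to see, and that the converse has been confirmed only for special classes of groups (rank two, $C_2\oplus C_2\oplus C_{2n}$, $C_{p^k}^r$), citing \cite{Ba-Ge-Zh21d,Ge-Zh18a}. So there is no proof in the paper to compare yours against. Your treatment of the easy direction is correct and is exactly the standard construction the paper has in mind: for $C_n$ the element $B_m=g^{nm}(-g)^{nm}$ admits only the atoms $g^n$, $(-g)^n$, $g(-g)$, giving $\mathsf L(B_m)=\{2m+(n-2)j: 0\le j\le m\}$ with elasticity $n/2$, and the analogous computation for $C_2^r$ with $U=e_1\cdots e_r(e_1+\cdots+e_r)$ gives difference $r-1\ge 2$; both verifications check out, and $|G|>4$ is indeed what forces the difference above $1$.

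For the other direction you offer only a programme, and you are right that this is where the open problem lives. Two points to be precise about. First, $\Delta_{\rho}(G)=\{1\}$ requires both that $1\in\Delta_{\rho}(G)$ and that no $d\ge 2$ occurs; your route to $1\in\Delta_{\rho}(G)$ via Theorem \ref{1.1} needs Property {\bf P}, which Theorem \ref{4.2} establishes only for certain classes (all non-cyclic groups with $\mathsf D(G)=\mathsf D^*(G)$, certain odd-order groups, and $C_2^4\oplus C_{2k}$), not for every non-cyclic, non-elementary-$2$ group --- you acknowledge this. Second, the exclusion of every $d\ge 2$ would require a classification of all maximal-elasticity elements (via Lemma \ref{l3.2}) together with a uniform ``fine exchange'' argument; no such argument is known in general, which is precisely why the statement is recorded as a conjecture rather than a theorem. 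In short: your proposal proves the half the paper calls easy, correctly identifies the other half as open, and does not (and could not, with current knowledge) close the gap.
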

It is easy to see that $\Delta_{\rho} (G) \ne  \{1\}$ if $G$ is cyclic or an elementary $2$-group. The reverse implication has been confirmed, among others,  for groups of rank two, and for groups isomorphic either to $C_2\oplus C_2\oplus C_{2n}$ or to $C_{p^k}^{r}$, where $n,r\geq 2, k\geq 1$ and $p$ is a prime number with $p^k\geq 3$  (see \cite{Ba-Ge-Zh21d,Ge-Zh18a}).

\begin{proposition}[\cite{Ba-Ge-Zh21d}, Corollary 2.3] \label{2.3} Let $H$ be  a transfer Krull monoid over a finite abelian group $G$ and
suppose that $\Delta_{\rho} (G) = \{1\}$. Then there exists some $M\in \N_0$ such that every $L\in \mathcal{L}(H)$ with $\rho (L)= \mathsf D (G)/2$  is an \AAP with difference $1$ and bound $M$.
\end{proposition}

If the above statement holds for $M$, then it holds for every $M'$ with $M'  \ge M$. However, apart from some exceptional cases,  the statement does not hold for $M=0$. Comparing Proposition \ref{2.3} with Proposition \ref{xy5.3}, we observe that the structure of length sets with maximal elasticity is much more rigid than those of general length sets.
Moreover, if $G$ is   cyclic or an elementary $2$-group with $|G|>4$, then $\Delta_{\rho} (G) \ne \{1\}$ and  the statement of Proposition \ref{2.3} does not hold.

\section{Proof of Theorem \ref{1.1}} \label{3}

The goal of this section is to prove Theorem \ref{1.1}. In order to highlight its significance, we first
recall a result on general length sets. If $H = \mathcal B (G)$, for some finite abelian group $G$, then Proposition \ref{3.0} implies that the length sets of almost all elements (in a combinatorial sense)  are intervals. If $H$ is an analytic Krull monoid (in the sense of \cite{Ka17a}), then Proposition \ref{3.0} implies that the length sets of almost all  (in the sense of Dirichlet density) elements of $H$ are intervals (\cite[Theorem 9.4.11]{Ge-HK06a}). 

\begin{proposition} \label{3.0} 
Let $H$ be a  Krull monoid with  finite  class group $G$ and suppose that each class contains a prime divisor.
Then there exists some $a^* \in H$ such that, for all $a\in H$, the catenary degree  $\mathsf c (a^*a) \le 3$, whence the length set  $\mathsf L (a^*a)$ is an interval.
\end{proposition}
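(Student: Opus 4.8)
The plan is to transport the problem to the monoid of zero-sum sequences and there to multiply by a single sequence that supplies every group element in high multiplicity; the resulting abundance of length-two atoms lets one interconnect all factorizations by short rewriting steps, forcing the catenary degree down to $3$.

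First, the clause ``whence \ldots is an interval'' is automatic once the bound on the catenary degree is in hand: by \eqref{catenary-distance}, $\mathsf c (a^* a) \le 3$ gives $\sup \Delta (\mathsf L (a^* a)) \le 1$, hence $\Delta (\mathsf L (a^* a)) \subset \{1\}$ and $\mathsf L (a^* a)$ is an interval. So it suffices to find $a^* \in H$ with $\mathsf c (a^* a) \le 3$ for every $a \in H$. Since $H$ is Krull with class group $G$ and a prime divisor in each class, there is a transfer homomorphism $\theta \colon H \to \mathcal B (G)$, and for the block homomorphism of such a monoid the classic theory gives $\mathsf c (a) = \mathsf c (\theta (a))$ for every non-unit $a$. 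As $\theta$ is surjective up to units, any $W^* \in \mathcal B (G)$ lifts to some $a^* \in H$ with $\theta (a^*) \simeq W^*$; then $\theta (a^* a) \simeq W^* \theta (a)$, and $\theta (a)$ ranges over all of $\mathcal B (G)$. Thus the proposition reduces to exhibiting $W^* \in \mathcal B (G)$ with $\mathsf c (W^* B) \le 3$ for every $B \in \mathcal B (G)$.

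For the construction I would fix an even integer $M$, of the order of $\mathsf D (G)$ and depending only on $G$, and set $W^* = \prod_{g \in G \setminus \{0\}} g^{M}$. This is a zero-sum sequence since $M \sum_{g \ne 0} g = 0$, and every nonzero $g$ occurs in it with multiplicity $M$. Writing $B' = W^* B$, the multiplicities of $W^*$ guarantee a large reservoir of the length-two atoms $g \cdot (-g)$ (and $g^2$ when $2g = 0$) inside $B'$, which will serve as flexible currency. To bound $\mathsf c (B')$ I would fix a reference factorization $z^*$ of $B'$ and show that any $z \in \mathsf Z (B')$ can be joined to $z^*$ by a chain of factorizations whose consecutive distances are at most $3$. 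The elementary move dismantles a long atom: if $U = g \cdot h \cdot U_0$ with $|U| \ge 3$ occurs in $z$, one borrows the short atom $(g + h) \cdot (-(g + h))$ from the reservoir and refactors $U \cdot (g + h) \cdot (-(g + h))$ as the length-three atom $g \cdot h \cdot (-(g + h))$ together with a factorization of the shorter zero-sum sequence $U_0 \cdot (g + h)$ (shorter because $\sigma (U_0) = -(g + h)$). Since $2 + \bigl| |z| - |z'| \bigr| \le \mathsf d (z, z')$, a length-preserving such move has distance $2$ and a move changing the length by one has distance $3$, so carrying out these exchanges step by step drives $z$ towards factorizations by short atoms and, ultimately, to $z^*$.

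The hard part will be the bookkeeping of this last step: one must arrange that each exchange alters only a bounded number of atoms, so that it stays within distance $3$, and that the spare elements it consumes are always present. This is exactly where $M$ must be calibrated. Because every atom has length at most $\mathsf D (G)$, dismantling one and reabsorbing the leftover $U_0 \cdot (g + h)$ --- which may itself split into several atoms and so must be handled by splitting off length-three pieces one at a time --- consumes a number of auxiliary short atoms bounded in terms of $\mathsf D (G)$. Choosing $M$ of that order ensures that no factorization of $W^* B$ ever exhausts the reservoir before reaching the canonical form and that the reduction terminates, which yields $\mathsf c (W^* B) \le 3$ and completes the proof.
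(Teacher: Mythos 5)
Your reduction to $\mathcal B(G)$ and the deduction of the interval property from $\mathsf c(a^*a)\le 3$ via \eqref{catenary-distance} are fine, and your overall architecture (adjoin a fixed zero-sum sequence containing every nonzero element with high multiplicity, then link factorizations by local exchanges of distance at most $3$) is the spirit of the result the paper invokes. Note, however, that the paper does not prove Proposition \ref{3.0} from scratch: it derives it from \cite[Theorem 7.6.9]{Ge-HK06a}, whose proof is a genuinely technical induction, not a routine calibration of a multiplicity $M$. One remark in your favour: your worry that $U_0\cdot(g+h)$ ``may itself split into several atoms'' is unfounded --- any proper nonempty zero-sum subsequence of $U_0\cdot(g+h)$ would produce one of $U=g\cdot h\cdot U_0$, so $U_0\cdot(g+h)$ is automatically an atom, and your elementary move is a clean exchange of two atoms for two atoms, of distance exactly $2$. (Your proposed fallback of ``splitting off length-three pieces one at a time'' would not make sense anyway, since the intermediate stages would not be factorizations of the whole element.)

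The real gaps are elsewhere and are not mere bookkeeping. First, the corrected move preserves the number of atoms, so no chain of such moves can join two factorizations of different lengths; but $\mathsf c(W^*B)\le 3$ forces a chain from a minimal-length to a maximal-length factorization, so you also need length-changing steps of distance $\le 3$ (replacing two atoms by three, say), and these are never exhibited. Second, the reservoir accounting cannot work with $M$ depending only on $G$: converting a factorization of $W^*B$ into one by atoms of length at most $3$ consumes about $\sum_{U}(|U|-3)$ auxiliary length-two atoms, which grows linearly in $|B|$ (take $B=U^{2k}$ for an atom $U$ of length $\mathsf D(G)$), while $W^*$ supplies only $|G|M/2$ of them; so the global ``drive everything to short atoms'' strategy is unsalvageable for a fixed $W^*$, and one must instead argue locally and inductively as in \cite[Theorem 7.6.9]{Ge-HK06a}. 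Third, an arbitrary factorization $z$ of $W^*B$ need not contain any length-two atom as a factor at all (nothing forces $z$ to factor $W^*$ separately from $B$), so even the first move may be unavailable. As it stands the proposal is a plausible plan whose central step is missing.
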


\begin{proof}
This easily follows from \cite[Theorem 7.6.9]{Ge-HK06a}.
\end{proof}

For the proof of Theorem \ref{1.1},   we proceed via a series of lemmas.

\begin{lemma}\label{lemmainitial}
    Let $H$ be a  \BF-monoid with accepted elasticity and let $a,b\in H$ with $\rho(\mathsf L(a))=\rho(\mathsf L(b))=\rho(H)$. Then  $\rho(\mathsf L(ab))=\rho(H)$, $\max\mathsf L(ab)=\max\mathsf L(a)+\max\mathsf L(b) $ and $
 \min\mathsf L(ab)=\min\mathsf L(a)+\min\mathsf L(b)$.
\end{lemma}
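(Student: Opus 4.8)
The plan is to prove the three assertions in sequence, since the length and elasticity claims will follow once we understand $\max$ and $\min$ of $\mathsf{L}(ab)$. First recall the elementary sub/superadditivity properties of length sets in any atomic monoid: for all $a,b \in H$ we have $\mathsf{L}(a) + \mathsf{L}(b) \subseteq \mathsf{L}(ab)$, simply by concatenating factorizations. Taking maxima and minima over this inclusion gives
\[
\max\mathsf{L}(a) + \max\mathsf{L}(b) \le \max\mathsf{L}(ab), \qquad \min\mathsf{L}(ab) \le \min\mathsf{L}(a) + \min\mathsf{L}(b).
\]
So the content of the two equalities is the two reverse inequalities, which are \emph{not} automatic: a factorization of $ab$ need not respect the product structure. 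This is where the hypothesis of maximal elasticity enters, and it is the main obstacle.

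The key idea for the reverse inequalities is to exploit $\rho(H) = \rho(\mathsf{L}(a)) = \rho(\mathsf{L}(b))$ as a bound that constrains all length sets. Because $H$ has accepted elasticity, $\rho(H) = \max\mathsf{L}(c)/\min\mathsf{L}(c)$ is a genuine maximum, so for \emph{every} $c \in H$ we have $\max\mathsf{L}(c) \le \rho(H)\cdot\min\mathsf{L}(c)$. Applying this to $c = ab$ and combining with the two easy inequalities above, I would chain the estimates
\[
\max\mathsf{L}(ab) \le \rho(H)\cdot\min\mathsf{L}(ab) \le \rho(H)\cdot\bigl(\min\mathsf{L}(a)+\min\mathsf{L}(b)\bigr) = \max\mathsf{L}(a) + \max\mathsf{L}(b),
\]
where the final equality uses $\rho(H)\cdot\min\mathsf{L}(a) = \max\mathsf{L}(a)$ and likewise for $b$. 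Comparing with the trivial lower bound $\max\mathsf{L}(a)+\max\mathsf{L}(b) \le \max\mathsf{L}(ab)$ forces equality throughout, which simultaneously yields $\max\mathsf{L}(ab) = \max\mathsf{L}(a)+\max\mathsf{L}(b)$ and $\min\mathsf{L}(ab) = \min\mathsf{L}(a)+\min\mathsf{L}(b)$ (the middle inequality must also be an equality).

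Finally, the elasticity claim $\rho(\mathsf{L}(ab)) = \rho(H)$ is immediate once both equalities are established: we compute
\[
\rho(\mathsf{L}(ab)) = \frac{\max\mathsf{L}(ab)}{\min\mathsf{L}(ab)} = \frac{\max\mathsf{L}(a)+\max\mathsf{L}(b)}{\min\mathsf{L}(a)+\min\mathsf{L}(b)} = \frac{\rho(H)\min\mathsf{L}(a)+\rho(H)\min\mathsf{L}(b)}{\min\mathsf{L}(a)+\min\mathsf{L}(b)} = \rho(H).
\]
I expect the only delicate point is checking that $\min\mathsf{L}(a), \min\mathsf{L}(b) > 0$ so that the ratios are well-defined and the mediant computation is valid; this holds because $a,b$ are non-units with $\rho(\mathsf{L}(a)) = \rho(H) > 1$ (as $|G| \ge 3$ forces $\mathsf{D}(G)/2 > 1$), ruling out degenerate one-element length sets. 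The whole argument is a short squeeze, so I would present it compactly rather than as separate sub-lemmas.
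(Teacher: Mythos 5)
Your proof is correct and follows essentially the same route as the paper: both start from the inclusion $\mathsf L(a)+\mathsf L(b)\subset \mathsf L(ab)$ and squeeze using the maximality of the elasticity, the only cosmetic difference being that the paper invokes the mediant inequality $\frac{x_1+x_2}{y_1+y_2}\ge \min\{x_1/y_1,\,x_2/y_2\}$ where you substitute $\max\mathsf L(a)=\rho(H)\min\mathsf L(a)$ directly. (One tiny slip: the bound $\max\mathsf L(c)\le \rho(H)\min\mathsf L(c)$ comes from $\rho(H)$ being a supremum, not from acceptance, and your closing aside about $|G|\ge 3$ is out of place since the lemma concerns a general $\BF$-monoid; neither affects the argument.)
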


\begin{proof}
    Using the inclusion $\mathsf L(a)+\mathsf L(b)\subset \mathsf L(ab)$ for all $a,b\in H$,  we get $$\min\mathsf L(ab)\leq \min\mathsf L(a)+\min\mathsf L(b)\leq \max\mathsf L(a)+\max\mathsf L(b)\leq \max\mathsf L(ab)$$
    and hence
    \[\rho(H)\geq \rho(\mathsf L(ab))\geq \frac{\max\mathsf L(a)+\max\mathsf L(b)}{\min\mathsf L(a)+\min\mathsf L(b)}\geq \textup{min}\Bigl\{  \frac{\max\mathsf L(a)}{\min\mathsf L(a)}, \frac{\max\mathsf L(b)}{\min\mathsf L(b)}\Bigr\}=\rho(H).\]
    Thus we obtain $\rho(\mathsf L(ab))=\rho(H)$, and also $\max\mathsf L(ab)=\max\mathsf L(a)+\max\mathsf L(b) $ and $
 \min\mathsf L(ab)=\min\mathsf L(a)+\min\mathsf L(b)$.
\end{proof}

\begin{lemma}[\cite{Ge-Zh18a}, Lemma 3.2 (i)]\label{l3.2}
    Let $G$ be a finite abelian group with $|G|\geq 3$. A sequence $A\in \mathcal{B}(G)$ satisfies the equality $\rho(\mathsf L(A))=\mathsf D(G)/2$ if and only if there exist $k,l \in \N$ and $U_1,.\hspace{0.05cm}.\hspace{0.05cm}.\hspace{0.05cm},U_k, V_1,.\hspace{0.05cm}.\hspace{0.05cm}.\hspace{0.05cm},V_l\in \mathcal{A}(G)$ with $|U_1|=\ldots=|U_k|=\mathsf D(G), \hspace{0.1cm}|V_1|=\ldots=|V_l|=2$ such that $A=U_1\cdot\ldots\cdot U_k=V_1\cdot \ldots \cdot V_l $.
\end{lemma}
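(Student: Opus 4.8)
The plan is to reduce the whole statement to the elementary length bound $2 \le |W| \le \mathsf D(G)$, valid for every atom $W$ of $\mathcal B(G)$ whose support avoids $0$, and to its two sharp equality cases. First I would record the governing inequalities. Assume $0 \nmid A$. Then every atom $W \mid A$ satisfies $\supp(W) \subseteq \supp(A) \subseteq G \setminus \{0\}$, so $2 \le |W| \le \mathsf D(G)$; consequently any factorization $A = W_1 \cdot \ldots \cdot W_m$ into $m$ atoms obeys $2m \le |W_1 \cdot \ldots \cdot W_m| = |A| \le \mathsf D(G)\,m$. This yields $\min \mathsf L(A) \ge |A|/\mathsf D(G)$, $\max \mathsf L(A) \le |A|/2$, and hence $\rho(\mathsf L(A)) \le \mathsf D(G)/2$. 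The crucial point is the equality analysis: $|A| = 2m$ forces every atom of that particular factorization to have length $2$, while $|A| = \mathsf D(G)\,m$ forces every atom to have length $\mathsf D(G)$. These two observations drive both implications.

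For the direction $(\Leftarrow)$, the two given factorizations show $k, l \in \mathsf L(A)$, and counting letters gives $k\,\mathsf D(G) = |A| = 2l$. Since each $V_j$ is a length-$2$ atom it cannot contain $0$, so $0 \nmid A$ and the bounds above apply; then $|A|/\mathsf D(G) = k \le \min \mathsf L(A)$ together with $\min \mathsf L(A) \le k$ forces $\min \mathsf L(A) = k$, and symmetrically $\max \mathsf L(A) = l$, whence $\rho(\mathsf L(A)) = l/k = \mathsf D(G)/2$.

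For $(\Rightarrow)$, I would first rule out $0 \mid A$. Writing $A = 0^n B$ with $0 \nmid B$ and $n \ge 1$, every factorization of $A$ consists of $n$ copies of the atom $0$ together with a factorization of $B$, so $\mathsf L(A) = n + \mathsf L(B)$; if $\mathsf L(B)$ is a singleton then $\rho(\mathsf L(A)) = 1 < \mathsf D(G)/2$, and otherwise $\rho(\mathsf L(A)) = (n + \max \mathsf L(B))/(n + \min \mathsf L(B)) < \max \mathsf L(B)/\min \mathsf L(B) \le \mathsf D(G)/2$ (note $\mathsf D(G) \ge 3$ as $|G| \ge 3$), a contradiction in either case. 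Thus $0 \nmid A$. Setting $p = \min \mathsf L(A)$ and $q = \max \mathsf L(A)$, the hypothesis $q/p = \mathsf D(G)/2$ together with $q \le |A|/2$ and $p \ge |A|/\mathsf D(G)$ forces $q = |A|/2$ and $p = |A|/\mathsf D(G)$: writing $q = |A|/2 - s$ and $p = |A|/\mathsf D(G) + t$ with $s, t \ge 0$, the relation $q = (\mathsf D(G)/2)p$ collapses to $s + (\mathsf D(G)/2)t = 0$, so $s = t = 0$. Applying the equality cases from the first paragraph to a factorization of length $q$ produces the atoms $V_1, \ldots, V_l$ (with $l = q$, all of length $2$), and to a factorization of length $p$ the atoms $U_1, \ldots, U_k$ (with $k = p$, all of length $\mathsf D(G)$), as desired.

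The argument is mechanical once the two equality cases are isolated, so I do not anticipate a genuine obstacle; the only care needed is the bookkeeping for the length-one atom $0$ and the verification that equality in $q/p \le \mathsf D(G)/2$ propagates to each factor bound separately, both of which are handled above.
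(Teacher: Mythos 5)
Your proposal is correct and follows essentially the same route as the paper: both arguments first eliminate the prime $0$ from $A$ via the strict decrease of $(m+\max\mathsf L(C))/(m+\min\mathsf L(C))$, and then extract the two factorizations from the equality case of the letter-counting bound $2\max\mathsf L(A)\le |A|\le \mathsf D(G)\min\mathsf L(A)$. Your write-up is a bit more explicit about the converse direction (which the paper dismisses as straightforward), but there is no substantive difference in method.
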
 \begin{proof}
    Suppose that $\rho(\mathsf L(A))=\mathsf D(G)/2$ for some $A\in \mathcal{B}(G)$. If $A=0^mC$ for $m\in \N_0$ and $C\in \mathcal{B}(G\setminus\{0\})$, then from the inequality
\[\frac{\mathsf D(G)}{2}=\frac{\max\mathsf L(A)}{\min\mathsf L(A)}=\frac{m+\max\mathsf L(C)}{m+\min\mathsf L(C)}\leq \frac{\max\mathsf L(C)}{\min\mathsf L(C)}\leq \frac{\mathsf D(G)}{2}\] we obtain $m=0$. Now write $A=U_1\cdot\ldots\cdot U_k=V_1\cdot\ldots\cdot V_l$, where $k=\min\mathsf L(A), l=\max\mathsf L(A)$ and $U_1,.\hspace{0.05cm}.\hspace{0.05cm}.\hspace{0.05cm},U_k,\hspace{0.05cm}V_1,\hspace{0.05cm}.\hspace{0.05cm}.\hspace{0.05cm}.\hspace{0.05cm},V_k\in \mathcal{A}(G)$. Then $l/k = \mathsf D (G)/2$ and
\[2l\leq \sum_{i=1}^{l}|V_i|=\sum_{j=1}^{k}|U_j|\leq k \mathsf D(G)=2l,\]
which implies that $|U_i|=\mathsf D(G)$ for all $i\in [1,k]$ and $|V_j|=2$ for all $j\in[1,l]$. The reverse implication is straightforward.
\end{proof}

\begin{lemma}\label{rkbjer}
Let $H$ be a \BF-monoid with accepted elasticity and let $a\in H$ be an element with maximal elasticity such that $\min\mathsf L(a)+1$ and   $\max\mathsf L(a)-1$ are lengths of $a$. Then there is an $N\in \N$ such that $\mathsf L(a^{n})$ is an interval for all $n\geq N$.
\end{lemma}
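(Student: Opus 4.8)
The plan is to use the hypothesis only through the four lengths $m := \min \mathsf L(a)$, $m+1$, $M-1$ and $M := \max \mathsf L(a)$, and to produce the full interval $[\min \mathsf L(a^n), \max \mathsf L(a^n)]$ inside $\mathsf L(a^n)$ using nothing but the sumset inclusion $\mathsf L(x) + \mathsf L(y) \subseteq \mathsf L(xy)$.

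First I would pin down the endpoints of $\mathsf L(a^n)$. Since $a$ has maximal elasticity and $H$ has accepted elasticity, applying Lemma \ref{lemmainitial} inductively to $a$ and $a^{n-1}$ shows that each power $a^n$ again has maximal elasticity, with $\min \mathsf L(a^n) = nm$ and $\max \mathsf L(a^n) = nM$. In particular $\mathsf L(a^n) \subseteq [nm, nM]$, so it remains to establish the reverse inclusion $[nm, nM] \subseteq \mathsf L(a^n)$ for all sufficiently large $n$.

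For the core step, note that $\{m, m+1, M-1, M\} \subseteq \mathsf L(a)$ by hypothesis, so the $n$-fold sumset of this four-element set is contained in $\mathsf L(a^n)$. Selecting $\beta$ of the $n$ factors from $\{M-1, M\}$ and the remaining $n-\beta$ from $\{m, m+1\}$ realizes exactly the interval
\[
I_\beta := \bigl[\, nm + \beta(M-m) - \beta,\ nm + \beta(M-m) + (n-\beta) \,\bigr],
\]
and hence $\bigcup_{\beta=0}^{n} I_\beta \subseteq \mathsf L(a^n)$. The left endpoint of $I_0$ equals $nm$ and the right endpoint of $I_n$ equals $nM$, so the union spans the desired range; what I must check is that no gaps appear between consecutive intervals. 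Comparing the right endpoint of $I_\beta$ with the left endpoint of $I_{\beta+1}$, one sees that $I_\beta$ and $I_{\beta+1}$ overlap or abut precisely when $M - m \le n + 2$. Therefore, setting $N := \max\{1, M - m - 2\}$, for every $n \ge N$ the union $\bigcup_{\beta} I_\beta$ is the single interval $[nm, nM]$, whence $\mathsf L(a^n) = [nm, nM]$ is an interval.

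I do not expect a genuine obstacle: the claim collapses to a one-parameter gluing estimate for sumsets, and the only real input from factorization theory is Lemma \ref{lemmainitial}, which guarantees that the outermost intervals $I_0$ and $I_n$ meet $\min \mathsf L(a^n)$ and $\max \mathsf L(a^n)$ on the nose. The points that need mild care are the exact threshold and the degenerate cases $M - m \in \{1,2\}$, in which some of the four prescribed lengths coincide; these are absorbed by the choice $N = \max\{1, M-m-2\}$.
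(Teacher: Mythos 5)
Your proof is correct and follows essentially the same route as the paper: both arguments use Lemma \ref{lemmainitial} to pin $\min\mathsf L(a^n)=nm$ and $\max\mathsf L(a^n)=nM$, and both observe that the $n$-fold sumset of $\{m,m+1,M-1,M\}\subset\mathsf L(a)$ fills the whole interval $[nm,nM]$ once $n$ is large enough. You merely make the gluing of the intervals $I_\beta$ and the explicit threshold $N=\max\{1,M-m-2\}$ more precise than the paper, which takes $n\ge \max\mathsf L(a)-\min\mathsf L(a)$ and asserts the interval property of the sumset without computation.
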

\begin{proof} Let $k:=\textup{min }\mathsf L(a)$,  $l:=\textup{max }\mathsf L(a)$, and suppose
that $[k,k+1]\cup[l-1,l]\subset\mathsf L(a)$.  Then the $(l-k)$-fold sumset  $(l-k)\mathsf L(a)$ is an interval. Since $\rho(\mathsf L(a))=\rho(H)$,   Lemma \ref{lemmainitial} implies that maximal and minimal elements of $(l-k)\mathsf L(a)$ coincide with the maximal and minimal elements of $\mathsf L(a^{l-k})$, respectively. Hence, $\mathsf L(a^{l-k})$ is an interval itself. Moreover,  $\mathsf L(a^n)$ is also an interval for all $n\geq l-k$, which again follows from Lemma \ref{lemmainitial}.\end{proof}

\begin{lemma}[\cite{Ge-HK06a}, Theorem 3.1.4 and Proposition 2.7.5]  \label{finiteness} 
 Let $H$ be a monoid such that $H_{\textup{red}}$ is finitely generated. Then the set of distances $\Delta (H)$ is finite and the elasticity  $\rho(H)$ is rational.
Moreover, if $S\subset H$ is a saturated submonoid, then $S_{\red}$ is finitely generated.
\end{lemma}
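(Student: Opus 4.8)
The plan is to prove Lemma~\ref{finiteness} by invoking the two cited results and organizing the argument around the saturation property, since the final assertion about saturated submonoids is the step that links the first two claims to a reusable form. First I would record that since $H_{\textup{red}}$ is finitely generated, the hypotheses of \cite[Theorem~3.1.4]{Ge-HK06a} are met, which directly yields that $\Delta(H)$ is finite and $\rho(H)$ is rational; these are the two structural finiteness conclusions and require no further work beyond citation, because both invariants depend only on $\mathcal{L}(H)$, which in turn depends only on $H_{\textup{red}}$. The content worth spelling out is that finite generation is inherited: here I would apply \cite[Proposition~2.7.5]{Ge-HK06a}.

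Next I would turn to the saturated submonoid claim, which is where one must actually say something. The approach is to use that a saturated submonoid $S\subset H$ is, by definition, one for which the inclusion $S\hookrightarrow H$ is a divisor homomorphism. The key observation is that divisor homomorphisms into finitely generated monoids pull back finite generation, so that $S_{\red}$ inherits finite generation from $H_{\textup{red}}$. Concretely, I would pass to the reduced monoids and note that the induced map $S_{\red}\to H_{\textup{red}}$ remains a divisor homomorphism, then appeal to the cited structural result that a saturated submonoid of a finitely generated (reduced) monoid is again finitely generated. This is precisely the content of the quoted \cite[Proposition~2.7.5]{Ge-HK06a}, so the body of the proof reduces to checking that the saturation hypothesis survives the passage to $H_{\textup{red}}$ and that the finite-generation hypothesis on $H_{\textup{red}}$ is the input the proposition requires.

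The main obstacle, modest as it is for a cited lemma, is bookkeeping about reduced monoids: one must verify that the divisor homomorphism property is preserved when replacing $H$ and $S$ by their associated reduced monoids, so that the finite-generation transfer applies to $S_{\red}$ rather than to $S$ itself. Since the statement is phrased entirely in terms of $S_{\red}$ and $H_{\textup{red}}$, and since both \cite[Theorem~3.1.4]{Ge-HK06a} and \cite[Proposition~2.7.5]{Ge-HK06a} are stated for exactly this reduced setting, I expect the proof to consist essentially of the two citations together with the remark that the saturation hypothesis descends to reduced monoids. I would therefore keep the write-up to a few lines, citing \cite[Theorem~3.1.4 and Proposition~2.7.5]{Ge-HK06a} and noting only that the relevant divisor-homomorphism and finite-generation hypotheses are inherited, rather than grinding through the internal proofs of those two results.
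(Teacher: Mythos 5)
Your proposal is correct and matches the paper's treatment: the paper gives no proof body for Lemma \ref{finiteness} at all, relying entirely on the citation of \cite[Theorem 3.1.4 and Proposition 2.7.5]{Ge-HK06a} in the lemma header, and your write-up is just that citation plus the routine remark that saturation and finite generation descend to the reduced monoids. Nothing further is needed.
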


Let $\mathcal{B}_{\rho}(G)$ be the set of all zero-sum sequences with maximal elasticity and the identity element of $\mathcal{B}(G)$. Then Lemma \ref{lemmainitial} provides that  $\mathcal{B}_{\rho}(G)$ is a submonoid of $\mathcal{B}(G)$.

\begin{proposition}\label{fingen} Let $G$ be a finite abelian group. Then the submonoid  $\mathcal{B}_{\rho}(G) \subset \mathcal{B}(G)$ is finitely generated.
\end{proposition}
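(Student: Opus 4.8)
The plan is to deduce finite generation from Lemma \ref{finiteness}. Since $\mathcal{B}(G)$ is finitely generated and reduced (its only unit is the empty sequence, as $\mathcal{B}(G)\subset\mathcal F(G)$), so that $\mathcal{B}(G)_{\red}=\mathcal{B}(G)$, it suffices to prove that $\mathcal{B}_\rho(G)$ is a \emph{saturated} submonoid of $\mathcal{B}(G)$; then $\mathcal{B}_\rho(G)=\mathcal{B}_\rho(G)_{\red}$ is finitely generated. Concretely, I would fix $A,B\in\mathcal{B}_\rho(G)$ with $A\mid B$ in $\mathcal{B}(G)$, put $C=A^{-1}B\in\mathcal{B}(G)$, and show $C\in\mathcal{B}_\rho(G)$. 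The cases $A=1$ or $B=1$ are trivial, so assume $A,B\neq 1$. By Lemma \ref{l3.2} each of $A,B$ factors both into atoms of length $\mathsf D(G)$ and into atoms of length $2$; in particular $\min\mathsf L(A)=|A|/\mathsf D(G)$, $\max\mathsf L(A)=|A|/2$, and likewise for $B$.

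First I would settle the length-$2$ side, which is easy because a length-$2$ atom has the shape $g(-g)$ and so factorization into length-$2$ atoms is unique: a sequence $X$ factors into length-$2$ atoms precisely when $0\notin\supp(X)$, $\mathsf v_g(X)=\mathsf v_{-g}(X)$ for all $g$ with $2g\neq 0$, and $\mathsf v_g(X)$ is even for all $g$ with $\ord(g)=2$. All of these conditions are preserved under the difference $\mathsf v_g(C)=\mathsf v_g(B)-\mathsf v_g(A)$, since they hold for $A$ and $B$; moreover $0\notin\supp(C)$ because $0\notin\supp(B)$. Hence $C$ factors into length-$2$ atoms and $\max\mathsf L(C)=|C|/2$.

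It remains to show $C$ also factors into atoms of length $\mathsf D(G)$, equivalently $\min\mathsf L(C)=|C|/\mathsf D(G)$. Granting this, $\rho(\mathsf L(C))=(|C|/2)/(|C|/\mathsf D(G))=\mathsf D(G)/2$, so $C\in\mathcal{B}_\rho(G)$ by Lemma \ref{l3.2}, and saturation is proved. Since $|B|=|A|+|C|$ and $\min\mathsf L$ is subadditive, the inequality $\min\mathsf L(C)\ge |C|/\mathsf D(G)$ is immediate; the whole content is the reverse inequality, which is exactly the length identity $\min\mathsf L(B)=\min\mathsf L(A)+\min\mathsf L(C)$. I would attempt it by factorization surgery: starting from a length-minimal factorization $B=U'_1\cdots U'_{k'}$ into $\mathsf D(G)$-atoms, produce a $\mathsf D(G)$-atom factorization of $B$ in which a $\mathsf D(G)$-atom factorization of the divisor $A$ occurs as a subproduct; the complementary atoms then give a $\mathsf D(G)$-atom factorization of $C$.

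This extension/subproduct step is the main obstacle. It cannot be reached by length-set estimates alone: combining $\mathsf L(A)+\mathsf L(C)\subseteq\mathsf L(B)$ with the known extremal lengths of $A$ and $B$ yields only $\rho(\mathsf L(C))\le\mathsf D(G)/2$, which is the trivial global bound, so genuinely factorization-theoretic input is unavoidable. It also seems unsafe to treat the two sides independently, since it is not clear that the sequences factoring into $\mathsf D(G)$-atoms form a saturated submonoid; rather, I expect the symmetry of $C$ obtained from the length-$2$ side must be used together with the $\mathsf D(G)$-atom structure. The cleanest route I foresee is to pass to the free submonoid of symmetric sequences (those $X$ with $\mathsf v_g(X)=\mathsf v_{-g}(X)$ and $\mathsf v_g(X)$ even for $\ord(g)=2$), which is free on the length-$2$ atoms and in which divisibility is componentwise, and then to prove the required $\mathsf D(G)$-atom factorization of $C$ by induction on $\min\mathsf L(A)$, peeling off one $\mathsf D(G)$-atom of $A$ at a time and realizing it inside a minimal factorization of $B$. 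Establishing this peeling step rigorously is where the real work lies.
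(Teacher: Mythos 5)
Your reduction to Lemma \ref{finiteness} requires that $\mathcal{B}_{\rho}(G)$ be saturated in $\mathcal{B}(G)$, and this is false in general, so the step you flag as ``where the real work lies'' is not merely unfinished --- it cannot be carried out. Take $G=C_2^3$ with basis $(e_1,e_2,e_3)$, so $\mathsf D(G)=4$, and set
\[
U_1=e_1e_2e_3(e_1+e_2+e_3),\qquad U_2=e_1e_2(e_1+e_3)(e_2+e_3),\qquad U_3=e_3(e_1+e_2+e_3)(e_1+e_3)(e_2+e_3).
\]
These are minimal zero-sum sequences of length $\mathsf D(G)$, and in $B=U_1U_2U_3$ every element of $\supp(B)$ occurs with multiplicity exactly $2$, so $B$ also factors into six atoms of length two and $B\in\mathcal{B}_{\rho}(G)$ by Lemma \ref{l3.2}. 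Likewise $A=U_1^2\in\mathcal{B}_{\rho}(G)$, and $A\mid B$ in $\mathcal B(G)$. But $C=A^{-1}B=(e_1+e_3)^2(e_2+e_3)^2$ has two-element support, while every minimal zero-sum sequence of length $\ge 3$ over $C_2^3$ is squarefree; hence the only factorization of $C$ is $\bigl((e_1+e_3)^2\bigr)\bigl((e_2+e_3)^2\bigr)$, so $\mathsf L(C)=\{2\}$, $\rho(\mathsf L(C))=1<2=\mathsf D(G)/2$, and $C\notin\mathcal{B}_{\rho}(G)$. Your length-two analysis is correct ($C$ does factor into length-two atoms), but the identity $\min\mathsf L(B)=\min\mathsf L(A)+\min\mathsf L(C)$ fails here ($3\neq 2+2$), and no peeling argument can realize a $\mathsf D(G)$-atom factorization of $A$ as a subproduct of one of $B$.

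The paper avoids this by never forming quotients inside $\mathcal F(G)$. List the atoms of length $\mathsf D(G)$ as $U_1,\dots,U_\lambda$ and let $H\subset(\N_0^\lambda,+)$ consist of $\overline{0}$ and the exponent vectors $(k_1,\dots,k_\lambda)$ with $\rho\bigl(\mathsf L(\prod_{i}U_i^{k_i})\bigr)=\mathsf D(G)/2$. Divisibility in $\N_0^\lambda$ is componentwise inequality of exponents --- strictly stronger than divisibility of the corresponding sequences (in the example above, $2e_{U_1}\not\le e_{U_1}+e_{U_2}+e_{U_3}$ componentwise, so that pair never arises) --- and it makes the hard half of your argument automatic: the complement $\prod_i U_i^{m_i-l_i}$ is by construction a product of $\mathsf D(G)$-atoms, and only the length-two side, which is exactly your easy half, needs checking. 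Hence $H$ is saturated in $\N_0^\lambda$ and finitely generated by Lemma \ref{finiteness}, and $\mathcal{B}_{\rho}(G)$ is its image under the (surjective, by Lemma \ref{l3.2}) homomorphism $(k_1,\dots,k_\lambda)\mapsto\prod_i U_i^{k_i}$; finite generation passes to homomorphic images. This change of ambient monoid is the idea your proposal is missing.
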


\begin{proof}
 Since $\mathcal B (G)$ is finitely generated, $\mathcal A (G)$ is finite,  say
 $$\{U\in \mathcal{A}(G): |U|=
\mathsf D(G)\}=\{U_1,\ldots,U_{\lambda}\}.$$ Let $$H=\{\overline{0}\}\cup\{(k_1,\ldots,k_\lambda)\in \N_0^\lambda: \rho(\prod_{i=1}^{\lambda}U_i^{k_i})=\mathsf D(G)/2)\}\subset (\N^\lambda_0, +).$$ 
Suppose that $(l_1,\ldots ,l_\lambda)\in H$ divides $(m_1,\ldots ,m_\lambda)\in H$ in $\N_0^{\lambda}$, which means that $(m_1-l_1, \ldots, m_{\lambda}-l_{\lambda}) \in \N_0^{\lambda}$. We set $s:=(\sum_{i=1}^{\lambda}m_i)\mathsf D(G)/2$   and $t:=(\sum_{j=1}^{\lambda}l_j)\mathsf D(G)/2$. Then there exist atoms $V_1,\ldots ,V_{s}$  with length two such that $$ \prod_{i=1}^{\lambda}U_i^{m_i}=\prod_{i=1}^{s}V_i \hspace{0.5cm}\textup{ and } \hspace{0.5cm} \prod_{i=1}^{\lambda}U_i^{l_i}=\prod_{i=1}^{t}V_i. $$
Hence, we have $\prod_{i=1}^{\lambda}U_i^{m_i-l_i}=\prod_{i=t+1}^{s}V_i$. By Lemma \ref{l3.2}, we infer that $\rho(\prod_{i=1}^{\lambda}U_i^{m_i-l_i})=\mathsf D(G)/2$. Therefore, $(m_1-l_1,\ldots , m_\lambda-l_\lambda)\in H$, whence  $H$ is a saturated submonoid of the additive monoid $(\N^\lambda_0, +)$. Thus, Lemma \ref{finiteness} implies that $H$ is finitely generated. Now consider the following monoid homomorphism 
\begin{equation*}\label{homom}
    \varphi: H \longrightarrow \mathcal{B}_{\rho}(G), \hspace{1cm }(k_1,\ldots, k_\lambda)\longmapsto \prod_{i=1}^{\lambda} U_i^{k_i}.
\end{equation*}
By Lemma \ref{l3.2} and by construction of $H$,  the map $\varphi$ is an epimorphism, whence $\mathcal{B}_{\rho}(G)$ is  finitely generated.
\end{proof}

Now we give the proof of Theorem \ref{1.1}.

\begin{proof}[Proof of Theorem \ref{1.1}] 
Let $H$ be a transfer Krull monoid over a finite abelian group $G$ with $|G| \geq 3$ and which satisfies Property {\bf P}.
By \eqref{eqtransfer}, it is sufficient to prove the assertion  for the monoid $H=\mathcal{B}(G)$. Let $g_1, g_2 \in G$ and let $U_1, U_2 \in \mathcal A (G)$ be such that
\[|U_1| = |U_2| = \mathsf D(G),\hspace{0.1cm} g_1g_2 \hspace{0.1cm}| \hspace{0.1cm}U_1, \hspace{0.2cm} \text{and } \hspace{0.1cm}(g_1 + g_2)\hspace{0.1cm} |\hspace{0.1cm} U_2.\]
We set $U_1=g_1g_2g_3\cdot\ldots\cdot g_l$  and $U_2=(g_1+g_2)h_2h_3\cdot\ldots\cdot h_l$, where $l=\mathsf D(G)$. Now, consider the sequence $A'=(-U_1)U_1(-U_2)U_2$. By Lemma \ref{l3.2},  this is a sequence with maximal elasticity. We aim to show that $U_1U_2$ can be expressed as a product of three atoms. Observe that
 \[U_1U_2=((g_1+g_2)g_3g_4\cdot\ldots\cdot g_l)g_1g_2h_2\cdot\ldots \cdot h_l.\]
Since the sequence $g_1g_2h_2\cdot\ldots \cdot h_l$ has length $l+1>\mathsf D(G)$, and its subsequence $h_2h_3\cdot\ldots\cdot h_l$ is a zero-sumfree sequence, it follows that this sequence can be factored into  two atoms only. Additionally, observe that the sequence $(g_1+g_2)g_3g_4\cdot\ldots\cdot g_l$ is also an atom. Hence, we conclude that $\min\mathsf L(A')+1\in \mathsf L(A')$. Moreover, using the equality
\[((-g_1)g_1)((-g_2)g_2)((-g_1-g_2)(g_1+g_2))=((g_1+g_2)(-g_1)(-g_2))((-g_1-g_2)g_1g_2)\] we deduce that $\max{\mathsf L}(A')-1\in \mathsf L(A')$. Thus, by Lemma \ref{rkbjer} there is $n\in \N$ such that $\mathsf L((A')^n)$ is an interval. Since $\mathcal{B}(G)$ is a finitely generated reduced  monoid, Lemma \ref{finiteness} implies that its set of distances $\Delta(G)$ is finite.  Let $k= \max\{\max \Delta(G), n  \}$, and define $A^*=(A')^k$. We assert that for any sequence $A\in \mathcal{B}(G)$ with maximal elasticity, $\mathsf L(A^*A)$ forms an interval. Indeed, since $\max\Delta (\mathsf L(A))\leq k$ and $\mathsf L(A^*)$ is an interval of length greater than $k$, we get that  $\mathsf L(A)+\mathsf L(A^*)$ is an interval. Lastly, applying Lemma \ref{lemmainitial}, we conclude that $\mathsf L(A^*A)$ is also an interval and has elasticity $\mathsf D(G)/2$.
\end{proof}

As applications of Theorem \ref{1.1}, we present two corollaries, which demonstrate that almost all (in a combinatorial sense) length sets with maximal elasticity are intervals. To establish the first result, we require a lemma concerning the growth behavior of the $n$-fold sumset of a finite set. Khvanskii first proved that the $n$-fold sumset of a finite set $A\subset \Z^d$ grows like a polynomial. His result was generalized and refined, and the bounds were made effective by many authors (\cite{me-im02a, gr-sh-wa23a, cu-go21a}). We cite a version due to Granville, Shakan, and Walker (\cite{gr-sh-wa23a}).

\begin{lemma}\label{Khvanskii} 
Let $A\subset \Z^d$, with $d\geq 1$, be a finite subset. Then there exist a polynomial $P_A\in \Q[x]$ and a constant $N_A$ such that $|nA|=P_A(n)$ for all $n\geq N_A$.
    
\end{lemma}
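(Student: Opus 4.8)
The plan is to reduce the statement to the classical theorem that the Hilbert function of a finitely generated graded algebra over a field is eventually polynomial. Write $A = \{a_1, \ldots, a_k\}$ and recall that $nA$ denotes the $n$-fold sumset, i.e. the set of all sums $a_{i_1} + \cdots + a_{i_n}$ with $i_1, \ldots, i_n \in [1,k]$ (with the convention $0A = \{0\}$). The first step is a homogenization trick: I embed $\Z^d$ into $\Z^{d+1}$ by appending a coordinate, sending $a_i$ to $\tilde a_i = (a_i, 1)$, and I let $S \subset \Z^{d+1}$ be the submonoid generated by $\tilde a_1, \ldots, \tilde a_k$. Because the last coordinate is additive and each generator contributes $1$ to it, the elements of $S$ whose last coordinate equals $n$ are exactly the points $(b, n)$ with $b \in nA$. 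Hence the fibre of the last-coordinate map over $n$ has cardinality $|nA|$, and the whole problem becomes one of counting lattice points of $S$ in a fixed grading.

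The second step converts this counting problem into a graded dimension. I consider the affine monoid algebra $R = \C[S] = \bigoplus_{m \ge 0} R_m$ over $\C$, graded by the last coordinate, so that $R_m$ has as a $\C$-basis the Laurent monomials $t^{(b,m)}$ with $b \in mA$, and therefore $\dim_\C R_m = |mA|$. Since $S$ is generated by the finitely many elements $\tilde a_i$, each of degree $1$, the algebra $R$ is a finitely generated graded $\C$-algebra generated in degree $1$ over $R_0 = \C$: explicitly, the map of the polynomial ring $\C[x_1, \ldots, x_k]$ onto $R$ given by $x_i \mapsto t^{\tilde a_i}$ is a surjection that is homogeneous of degree $1$.

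The third step is to invoke the Hilbert polynomial theorem: the Hilbert function $m \mapsto \dim_\C R_m$ of a finitely generated graded algebra over a field, generated in degree $1$, agrees for all sufficiently large $m$ with a polynomial having rational coefficients. Applying this to $R$ yields a polynomial $P_A \in \Q[x]$ and a threshold $N_A$ with $|mA| = \dim_\C R_m = P_A(m)$ for all $m \ge N_A$, which is the assertion.

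I expect the only genuine point requiring care to be the verification that $S$ is generated in degree $1$, since this is exactly what guarantees that the Hilbert function is eventually a genuine polynomial rather than merely a quasi-polynomial (which is all one obtains for an arbitrary grading, where the Hilbert series is only rational with a denominator $\prod(1-t^{d_i})$). The homogenization in the first step is designed precisely to force all generator degrees to equal $1$, so once this is in place the remainder is a direct appeal to standard commutative algebra. Alternatively, one could follow the effective approach of Granville, Shakan and Walker cited above, which additionally controls the threshold $N_A$ and the degree of $P_A$ in terms of $A$; but the Hilbert-function argument is the shortest route to the qualitative statement needed here.
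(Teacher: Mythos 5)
Your proof is correct. The paper does not prove this lemma at all: it is quoted as a known result (Khovanskii's theorem) and attributed to the effective version of Granville, Shakan and Walker, so any actual argument is "a different route" by default. Your homogenization $a_i \mapsto (a_i,1)$, the identification of the degree-$n$ part of the monoid algebra $\C[S]$ with $nA$, and the appeal to the Hilbert polynomial theorem for a graded algebra generated in degree $1$ over $R_0=\C$ are all sound; the one point you flag as needing care (generation in degree $1$, which upgrades the conclusion from quasi-polynomial to polynomial) is indeed the crux, and your construction settles it, since every nonzero element of $S$ has positive last coordinate and every generator has last coordinate exactly $1$. This is essentially Khovanskii's original argument. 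What the citation buys instead is effectivity --- explicit control of the threshold $N_A$ and of the degree and leading term of $P_A$ --- none of which is used in the paper. It is also worth noting that your argument is exactly parallel to what the paper itself does in the proof of Corollary \ref{density}, where the same monoid algebra $\C[S]$ appears with a non-standard grading and the Hilbert--Serre theorem only yields a quasi-polynomial; your degree-$1$ homogenization is precisely what removes the periodicity there.
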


Note that there is  a canonical isomorphism from the free abelian monoid $\mathcal{F}(G)$ to $(\N_0^{|G|}, +)$ defined by \[\psi :\mathcal{F}(G)\longrightarrow \N_0^{|G|}, \hspace{0.2cm} S\longmapsto \big(\mathsf v_g(S)\big)_{g\in G}\]
We have $\mathcal{A}(\mathcal{B}_{\rho}(G))\subset \mathcal{B}_{\rho}(G)\subset \mathcal{F}(G)$. Set $\Omega:=\psi(\mathcal{A}(\mathcal{B}_{\rho}(G)))$. In particular, the monoid $\mathcal{B}_\rho(G)$ is isomorphic to the monoid $S:=[\Omega]$ generated by $\Omega$.

\begin{corollary} \label{3.6}
Let $G$ be a finite abelian group with $|G| \ge 3$. 
\begin{enumerate}
\item  For every  $k_0\in \N$, we have  
      \[
      \lim_{n\to\infty}\frac { \bigl| \bigl\{  
       A\in \mathcal{B}_{\rho}(G): \hspace{0.1cm} n-k_0 \in \mathsf L_{\mathcal{B}_{\rho}(G)}(A)   \bigr\} \bigr|} {\bigl| \bigl\{ A \in \mathcal{B}_{\rho} (G) : \hspace{0.1cm} n \in \mathsf L_{\mathcal{B}_{\rho}(G)}(A)\bigr\} \bigr|}= 1. 
      \]
      
\item If $G$ satisfies Property $\bf P$, then  
      \[
      \lim_{n\to\infty}\frac { \bigl| \bigl\{ A \in  \mathcal{B}_{\rho}(G) \, \colon \,
       \hspace{0.1cm}\mathsf L (A) \ \text{is an interval with} \ n \in \mathsf L_{\mathcal{B}_{\rho}(G)}(A)  \bigr\} \bigr|} {\bigl| \bigl\{ A \in \mathcal{B}_{\rho}(G) :  \hspace{0.1cm} n \in \mathsf L_{\mathcal{B}_{\rho}(G)}(A) \bigr\} \bigr|} = 1. 
      \]
\end{enumerate}      
\end{corollary}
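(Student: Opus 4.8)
The plan is to recast both statements as facts about the growth of sumsets in $\N_0^{|G|}$, using the isomorphism $\mathcal{B}_\rho(G)\cong S=[\Omega]$ recorded before the statement, and then to feed in Lemma \ref{Khvanskii} for part (1) and Theorem \ref{1.1} for part (2). The crucial dictionary entry is that, for $A\in\mathcal{B}_\rho(G)$, one has $n\in\mathsf L_{\mathcal{B}_\rho(G)}(A)$ exactly when $\psi(A)$ lies in the $n$-fold sumset $n\Omega$; since $\psi$ is injective this yields
\[
\bigl|\{A\in\mathcal{B}_\rho(G):n\in\mathsf L_{\mathcal{B}_\rho(G)}(A)\}\bigr|=|n\Omega|.
\]
I would first record that $\Omega\neq\emptyset$: since $|G|\ge 3$ gives $\mathsf D(G)\ge 3$, any atom $U$ of length $\mathsf D(G)$ yields $U(-U)\in\mathcal{B}_\rho(G)\setminus\{1\}$, so $\mathcal{A}(\mathcal{B}_\rho(G))\ne\emptyset$.

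For part (1), apply Lemma \ref{Khvanskii} to $\Omega\subset\N_0^{|G|}\subset\Z^{|G|}$: there are a polynomial $P_\Omega\in\Q[x]$ and a constant $N_\Omega$ with $|n\Omega|=P_\Omega(n)$ for $n\ge N_\Omega$. Because $\Omega\neq\emptyset$ forces $P_\Omega(n)=|n\Omega|\ge 1$ for all large $n$, the polynomial $P_\Omega$ is nonzero with positive leading coefficient. Hence, for fixed $k_0\in\N$,
\[
\frac{\bigl|\{A:n-k_0\in\mathsf L_{\mathcal{B}_\rho(G)}(A)\}\bigr|}{\bigl|\{A:n\in\mathsf L_{\mathcal{B}_\rho(G)}(A)\}\bigr|}=\frac{|(n-k_0)\Omega|}{|n\Omega|}=\frac{P_\Omega(n-k_0)}{P_\Omega(n)}\to 1\quad\text{as }n\to\infty,
\]
since a constant shift of the argument changes neither the degree nor the leading term of a polynomial. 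This is part (1).

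For part (2), the idea is to manufacture elements with interval length set by multiplying in the universal element of Theorem \ref{1.1}. Applying Theorem \ref{1.1} to $H=\mathcal{B}(G)$ (legitimate, as $G$ satisfies Property {\bf P} by hypothesis), we obtain $A^*$; since $A^*$ is a power of a maximal-elasticity sequence and $\mathcal{B}_\rho(G)$ is a submonoid of $\mathcal{B}(G)$, we have $A^*\in\mathcal{B}_\rho(G)$, and we fix some $m^*\in\mathsf L_{\mathcal{B}_\rho(G)}(A^*)$ (note $m^*\ge 1$). Writing
\[
\mathcal{G}_n=\{A\in\mathcal{B}_\rho(G):\mathsf L(A)\text{ is an interval and }n\in\mathsf L_{\mathcal{B}_\rho(G)}(A)\}
\]
for the numerator set, I would study, for $n>m^*$, the map
\[
\Phi_n\colon\{B\in\mathcal{B}_\rho(G):n-m^*\in\mathsf L_{\mathcal{B}_\rho(G)}(B)\}\longrightarrow\mathcal{B}_\rho(G),\qquad B\longmapsto A^*B.
\]
If $n-m^*\in\mathsf L_{\mathcal{B}_\rho(G)}(B)$, then $B\in\mathcal{B}_\rho(G)$ has maximal elasticity $\rho(\mathsf L(B))=\mathsf D(G)/2=\rho(\mathcal{B}(G))$, so Theorem \ref{1.1} makes $\mathsf L(A^*B)$ an interval; moreover $n=m^*+(n-m^*)\in\mathsf L_{\mathcal{B}_\rho(G)}(A^*)+\mathsf L_{\mathcal{B}_\rho(G)}(B)\subseteq\mathsf L_{\mathcal{B}_\rho(G)}(A^*B)$. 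Thus $\Phi_n$ lands in $\mathcal{G}_n$ and is injective by cancellativity, giving $|\mathcal{G}_n|\ge|(n-m^*)\Omega|$. Combining with $\mathcal{G}_n\subseteq n\Omega$ and part (1),
\[
1\ge\frac{|\mathcal{G}_n|}{|n\Omega|}\ge\frac{|(n-m^*)\Omega|}{|n\Omega|}\to 1\quad\text{as }n\to\infty,
\]
which is part (2).

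The genuine content is entirely carried by Lemma \ref{Khvanskii} and Theorem \ref{1.1}; what remains is bookkeeping. The points that most need care---and the likeliest source of a slip---are keeping the two length-set notations apart (it is the $\mathcal{B}(G)$-length set $\mathsf L(A)$ that must be an interval, while the counting is governed by the $\mathcal{B}_\rho(G)$-length set), confirming that $A^*$ really lies in $\mathcal{B}_\rho(G)$ so that a finite factorization length $m^*$ exists, and checking that $P_\Omega$ has positive leading coefficient so the ratios honestly converge to $1$ rather than being indeterminate.
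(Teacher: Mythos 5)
Your proposal is correct and follows essentially the same route as the paper: Part~1 identifies the counting function with $|n\Omega|$ and invokes Lemma~\ref{Khvanskii}, and Part~2 uses injectivity of multiplication by the element $A^*$ from Theorem~\ref{1.1} to sandwich the ratio between $|(n-m^*)\Omega|/|n\Omega|$ and $1$, then appeals to Part~1. The only differences are cosmetic (you make explicit that $A^*\in\mathcal{B}_\rho(G)$ and that the Khovanskii polynomial is eventually positive, points the paper leaves implicit).
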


\begin{proof}
1. Proposition \ref{fingen} implies that the set $\Omega\subset \N_0^{|G|}$, defined above,  is  finite. Moreover, since \[\{A\in \mathcal{B}_{\rho}(G):n\in \mathsf L _{\mathcal{B}_{\rho}(G)}(A)\}=\underbrace{\mathcal{A}(\mathcal{B}_{\rho}(G))\times \ldots
 \times \mathcal{A}(\mathcal{B}_{\rho}(G))}_{n-\textup{fold product}}\]
we get that $|\{A\in \mathcal{B}_{\rho}(G):n\in \mathsf L _{\mathcal{B}_{\rho}(G)}(A)\}|=|n\Omega|$ for all $n\in \N$. By Lemma \ref{Khvanskii}, there exist a polynomial $P\in \Q[x]$ and a constant $N$ such that $|n\Omega|=P(n)$ for all $n\geq N$. Thus

    $$\lim_{n\to\infty}\frac { \bigl| \bigl\{  
       A\in \mathcal{B}_{\rho}(G): \hspace{0.1cm} n-k_0 \in \mathsf L_{\mathcal{B}_{\rho}(G)}(A)   \bigr\} \bigr|} {\bigl| \bigl\{ A \in \mathcal{B}_{\rho} (G) : \hspace{0.1cm} n \in \mathsf L_{\mathcal{B}_{\rho}(G)}(A)\bigr\} \bigr|}=\lim_{n\to\infty}\frac{|(n-k_0)\Omega|}{|n\Omega|}= \lim_{n\to\infty}\frac{P(n-k_0)}{P(n)}=1.$$

2. Assume  that $G$ has Property $\bf P$. Then Theorem \ref{1.1} implies that there is some $A^* \in \mathcal{B}_{\rho} (G)$  such that $\mathsf L (A^*A)$ is an interval with elasticity $\mathsf D (G)/2$ for all  $A \in \mathcal{B}_{\rho} (G)$. Let $k_0\in \mathsf L_{\mathcal{B}_\rho(G)}(A^*)$. Thus
\[
\bigl\{ A^*A   : A\in \mathcal{B}_{\rho}(G) \bigr\}
\subset \bigl\{ A \in  \mathcal B_{\rho} (G) : \mathsf L (A) \ \text{is an interval}   \bigr\}, \
\]
which implies that 
$$|\{A\in \mathcal{B}_\rho(G): n-k_0\in \mathsf L_{\mathcal{B}_{\rho}(G)}(A) \}|=|\{A^*A:A\in \mathcal{B}_\rho(G), \hspace{0.1cm} n-k_0\in \mathsf L_{\mathcal{B}_{\rho}(G)}(A)\}| $$
$$\leq|\{A \in  \mathcal{B}_{\rho}(G) \, \colon \,
       \hspace{0.1cm}\mathsf L (A) \ \text{is an interval with} \ n \in \mathsf L_{\mathcal{B}_{\rho}(G)}(A)\}|, $$
whence the assertion follows from Part 1.      
\end{proof}

\begin{corollary}\label{density}
    Let $G$ be a finite abelian group with $|G|\geq 3$. 
    \begin{enumerate}
        \item For every $\alpha\in \N$, we have 
          \[
      \lim_{n\to\infty}\frac { \bigl| \bigl\{  
      B\in \mathcal{B}_\rho(G): |B|\leq n-\alpha \bigr\} \bigr|} {\bigl| \bigl\{ B\in \mathcal{B}_\rho(G): |B|\leq n \bigr\} \bigr|} = 1. 
      \]
      \item If $G$ satisfies Property $\bf P$, then  
      \[
      \lim_{n\to\infty}\frac { \bigl| \bigl\{ B\in \mathcal{B}_\rho(G): |B|\leq n, \hspace{0.1cm}\mathsf L(B) \text{ is an interval} \bigr\} \bigr|} {\bigl| \bigl\{B\in \mathcal{B}_\rho(G): |B|\leq n \bigr\} \bigr|} = 1. 
      \]
 \end{enumerate}
\end{corollary}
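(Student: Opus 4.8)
The plan is to transport both parts to the finitely generated monoid $S=[\Omega]\subset\N_0^{|G|}$, using that $\psi$ restricts to an isomorphism $\mathcal{B}_{\rho}(G)\to S$ (Proposition \ref{fingen}) and that the sequence length becomes the linear form $\ell\colon\N_0^{|G|}\to\N_0$, $\ell(x)=\sum_{g\in G}x_g$; indeed $\ell(\psi(B))=|B|$. Writing
\[
F(n)=\bigl|\{B\in\mathcal{B}_{\rho}(G):|B|\le n\}\bigr|=\bigl|\{s\in S:\ell(s)\le n\}\bigr|,
\]
Part 1 is exactly the assertion $F(n-\alpha)/F(n)\to 1$, and Part 2 will follow from Part 1 together with Theorem \ref{1.1} in the same way that Corollary \ref{3.6}.2 followed from Corollary \ref{3.6}.1. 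So the whole content sits in the growth estimate for $F$.

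For Part 1 the goal is to show that $F$ grows like a genuine monomial, namely $F(n)=cn^{d}+O(n^{d-1})$ for some integer $d\ge 1$ and a real constant $c>0$ \emph{independent of the residue class of $n$}. Here $d=\mathrm{rank}(S)$, which is positive because $\mathcal{B}_{\rho}(G)$ is infinite (any fixed element of maximal elasticity has arbitrarily long powers in $\mathcal{B}_{\rho}(G)$). Granting this, $F(n-\alpha)=c(n-\alpha)^{d}+O(n^{d-1})=cn^{d}+O(n^{d-1})$, so $F(n-\alpha)/F(n)\to 1$. To establish the estimate I would use the Hilbert series of $S$ with respect to the grading $\ell$. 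Since $S$ is a finitely generated submonoid of $\N_0^{|G|}$ on which $\ell$ is strictly positive (every generator, being a nonidentity zero-sum sequence without the element $0$, has $\ell$-degree $\ge 2$), the series $\Phi(t)=\sum_{s\in S}t^{\ell(s)}$ is a rational function whose denominator may be taken as a product of factors $1-t^{d_j}$, with $d_j$ the $\ell$-degrees of a finite generating set; by the standard theory of affine semigroups its pole at $t=1$ has order exactly $d=\mathrm{rank}(S)$, and all other poles lie at roots of unity and have order $\le d$.

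The main obstacle is precisely the constancy of the leading coefficient $c$. By Lemma \ref{l3.2} every $B\in\mathcal{B}_{\rho}(G)$ satisfies $|B|=2\max\mathsf L(B)$, so $\ell$ takes only \emph{even} values on $S$; consequently $F$ is a priori merely a quasi-polynomial, and one must verify that passing to partial sums destroys the periodic part. This is what the pole analysis gives: the summatory generating function is $\Phi(t)/(1-t)$, whose pole at $t=1$ has order $d+1$, and extracting coefficients shows the contribution of $t=1$ to $F(n)$ is $cn^{d}+O(n^{d-1})$ with $c>0$ (a positive residue divided by $d!$). Every other pole, sitting at some root of unity $\zeta\ne 1$ (in particular at $\zeta=-1$, forced by the even grading) and of order $\le d$, contributes to $F(n)$ a term of the form $\zeta^{-n}\cdot(\text{polynomial of degree}\le d-1)$; after the summation these become $O(n^{d-1})$ by Abel summation, since the partial sums of $\zeta^{-n}$ are bounded. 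Hence only $t=1$ governs the top-order term, $c$ is a genuine constant, and Part 1 follows.

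For Part 2, apply Theorem \ref{1.1} to obtain $A^{*}\in\mathcal{B}_{\rho}(G)$ such that $\mathsf L(A^{*}A)$ is an interval for every $A\in\mathcal{B}_{\rho}(G)$, and put $\alpha=|A^{*}|$ (if $\alpha=0$ the claim is immediate). The map $A\mapsto A^{*}A$ is injective on $\mathcal{B}_{\rho}(G)$ by cancellativity, satisfies $|A^{*}A|=|A^{*}|+|A|$, and lands in the set of $B\in\mathcal{B}_{\rho}(G)$ with $\mathsf L(B)$ an interval. Restricting to $|A|\le n-\alpha$ therefore gives
\[
\bigl|\{B\in\mathcal{B}_{\rho}(G):|B|\le n,\ \mathsf L(B)\text{ is an interval}\}\bigr|\ \ge\ \bigl|\{A\in\mathcal{B}_{\rho}(G):|A|\le n-\alpha\}\bigr|=F(n-\alpha),
\]
while $F(n)$ bounds the same count from above. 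Dividing by $F(n)$ and invoking Part 1 with this $\alpha$ squeezes the ratio to $1$.
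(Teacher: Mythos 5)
Your proof is correct. Part 2 coincides with the paper's argument: both take the $A^*$ supplied by Theorem \ref{1.1}, inject $\{A\in\mathcal{B}_\rho(G):|A|\le n-|A^*|\}$ via $A\mapsto A^*A$ into the set of $B$ with $|B|\le n$ whose length set is an interval, and then quote Part 1 with $\alpha=|A^*|$. For Part 1 you take a genuinely different, and in fact stronger, route. The paper stops at the quasi-polynomial level: by Hilbert--Serre and \cite[Proposition 4.4.1]{Ri12a}, the counting function $b_n=|\{s\in S:\ell(s)=n\}|$ is eventually polynomial on each residue class modulo some period $N$, so the paper only compares $n$ with $n-\alpha_N N$ (a shift that stays inside one residue class) and then transfers the conclusion to the cumulative counts by a Ces\`aro-type averaging argument with the error terms $\epsilon_i$. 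You instead prove the sharper statement $F(n)=cn^{d}+O(n^{d-1})$ with a genuine constant $c>0$ via partial fractions: the pole of the Hilbert series at $t=1$ has order $d=\mathrm{rank}(S)=\dim\C[S]$ and dominates, while the poles at other roots of unity (which are really present, since $\ell$ is even on $S$, as you rightly point out) have order at most $d$, and their oscillating contributions $\zeta^{-m}p(m)$ with $\deg p\le d-1$ are damped to $O(n^{d-1})$ by Abel summation once partial sums are taken. Your approach buys a cleaner asymptotic for $F$ and handles arbitrary shifts $\alpha$ directly; its only extra input is the pole-order bound at roots of unity $\zeta\ne 1$, which does hold because the Hilbert function is a quasi-polynomial of degree $d-1$. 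The paper's route avoids any analysis away from $t=1$, at the cost of rounding $\alpha$ up to a multiple of the period and running the averaging argument on the partial sums.
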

\begin{proof}
1. Let $|G|=m$.  For every $a=(a_1,\ldots, a_m)\in \N_0^m$, we define $|a|=a_1+\ldots +a_m$ and $X^a=X_1^{a_1}\ldots X_m^{a_m}$, where $X=(X_1,\ldots, X_m)$. We consider the algebra 
\[\C[S]:=\C[X^a: a\in \Omega]=\bigoplus_{i=0}^{\infty}\C[S]_n\subset \C[X_1,\ldots, X_m],\]
where $\C[S]_n$ is the linear space generated by monomials of degree $n$ in $\C[S]$. Hence $\C[S]$ is a finitely generated graded commutative algebra. Let the Hilbert series be 
\[H(\C[S],t):=\sum_{i=1}^{\infty}(\textup{dim }\C[S]_n)t^n.\]
The Hilbert-Serre Theorem (see \cite[Theorem 11.1]{at-mac69a} or \cite[Theorem 6.1]{Se02a}) implies that there exists a polynomial  $Q(t)\in \Z[t]$ such that  $H(\C[S],t)=\frac{Q(t)}{\prod_{i=1}^{\beta}(1-t^{m_i})}$ for some $\beta, m_1, \ldots, m_\beta\in \N$. Thus by \cite[Proposition 4.4.1]{Ri12a}, there exist $N\in \N$ and polynomials $f_0,f_1,\ldots ,f_{N-1}$ such that for all sufficiently large $n$, we have 
\[|\{a\in S: |a|=n\}|=\textup{dim }\C[S]_n=f_i(n) \hspace{0.2cm}\textup{ when }\hspace{0.2cm} n\equiv i\hspace{0.1cm}(\textup{mod N }).\]
Let $\alpha_N\in \N$  be such that $\alpha_NN\geq \alpha$. We claim that 
\[ \lim_{n\to\infty}\frac { \bigl| \bigl\{  
      a\in S: |a|=n-\alpha_NN \bigr\} \bigr|} {\bigl| \bigl\{ a\in S: |a|=n \bigr\} \bigr|}=1. \]
In fact, for every $i\in [0,N-1]$, we have 
\[ \lim_{k\to\infty}\frac { \bigl| \bigl\{  
      a\in S: |a|=kN+i-\alpha_NN \bigr\} \bigr|} {\bigl| \bigl\{ a\in S: |a|=kN+i \bigr\} \bigr|}=\lim_{k\to\infty}\frac{f_i((k-\alpha_N)N+i)}{f_i(kN+i)}=1.\]
Hence the claim follows. Next, since $$|\{B\in \mathcal{B}_\rho(G): |B|\leq n-\alpha\}|\geq |\{B\in \mathcal{B}_\rho(G): |B|\leq n-\alpha_NN\}|,$$ it suffices to show that 
\[\lim_{n\to\infty}\frac { \bigl| \bigl\{  
      a\in S: |a|\leq n-\alpha_NN \bigr\} \bigr|} {\bigl| \bigl\{ a\in S: |a|\leq n \bigr\} \bigr|}=\lim_{n\to\infty}\frac { \bigl| \bigl\{  
      B\in \mathcal{B}_\rho(G): |B|\leq n-\alpha_NN \bigr\} \bigr|} {\bigl| \bigl\{ B\in \mathcal{B}_\rho(G): |B|\leq n \bigr\} \bigr|} = 1.\]
    Let $b_n=|\{a\in S:|a|=n\}|, c_n=b_{n-\alpha_NN}, $ and $\epsilon_n=1-\frac{c_n}{b_n}$. Then our claim implies $\lim_{n\to\infty}\epsilon_n=0$. Note that $\lim_{n \to\infty }\sum_{i= 1}^{n}b_i=\infty$. Therefore 
    
        $$\lim_{n\to\infty}\frac { \bigl| \bigl\{  
      a\in S: |a|\leq n-\alpha_NN \bigr\} \bigr|} {\bigl| \bigl\{ a\in S: |a|\leq n \bigr\} \bigr|}=\lim_{n\to\infty}\frac{\sum_{i=1}^{n-\alpha_NN}b_i}{\sum_{i=1}^{n}b_i}=\lim_{n\to\infty}\frac{\sum_{i=\alpha_NN+1}^{n}c_i}{\sum_{i=1}^{n}b_i}$$
      $$=\lim_{n\to\infty}\frac{\sum_{i=\alpha_NN+1}^{n}(b_i-b_i\epsilon_i)}{\sum_{i=1}^{n}b_i}=\lim_{n\to\infty}\frac{\sum_{i=\alpha_NN+1}^{n}b_i-\sum_{i=\alpha_NN+1}^{n}b_i\epsilon_i}{\sum_{i=1}^{n}b_i}$$
$$=1-\lim_{n\to\infty}\frac{\sum_{i=\alpha_NN+1}^{n}b_i\epsilon_i}{\sum_{i=1}^{n}b_i}.$$
For every $\epsilon>0$, there exists $N_\epsilon\in \N$ with $N_\epsilon> \alpha_NN$ such that for every $i\geq N_\epsilon$, we have $\epsilon_i<\epsilon$. It follows that 
$$0\leq \lim_{n\to\infty}\frac{\sum_{i=\alpha_NN+1}^{n}b_i\epsilon_i}{\sum_{i=1}^{n}b_i}\leq \lim_{n\to\infty}\frac{\sum_{i=\alpha_NN+1}^{N_\epsilon}b_i\epsilon_i+\epsilon\sum_{i=N_\epsilon+1}^{n}b_i}{\sum_{i=1}^{n}b_i}\leq \lim_{n\to\infty}\frac{\sum_{i=\alpha_NN+1}^{N_\epsilon}b_i\epsilon_i}{\sum_{i=1}^{n}b_i}+\epsilon=\epsilon,$$
whence $\lim_{n\to\infty}\frac{\sum_{i=\alpha_NN+1}^{n}b_i\epsilon_i}{\sum_{i=1}^{n}b_i}=0$ and the assertion follows.

2. Assume  that $G$ has Property $\bf P$. Then again Theorem \ref{1.1} implies that there is some $A^* \in \mathcal{B}_{\rho} (G)$  such that $\mathsf L (A^*B)$ is an interval with elasticity $\mathsf D (G)/2$ for all  $B \in \mathcal{B}_{\rho} (G)$. Let $|A^*|=\alpha$. Thus
\[
\bigl\{ A^*B   : B\in \mathcal{B}_{\rho}(G) \bigr\}
\subset \bigl\{ B \in  \mathcal B_{\rho} (G) : \mathsf L (B) \ \text{is an interval}   \bigr\}, \
\]
which implies that $$|\{B\in \mathcal{B}_\rho(G): |B|\leq n-\alpha\}|=|\{A^*B:B\in \mathcal{B}_\rho(G), \hspace{0.1cm}|B|\leq n-\alpha\}| $$
$$\leq|\{B\in \mathcal{B}_\rho(G): |B|\leq n, \hspace{0.1cm}\mathsf L(B) \textup{ is an interval}\}|, $$
whence the assertion follows from Part 1.
\end{proof}

\section{On property \textbf{P} occurring in Theorem \ref{1.1}}\label{4}
Let  $G = C_{n_1} \oplus \ldots \oplus C_{n_r}$ with $r \in \N_0$ and $1 < n_1 \t \ldots \t n_r$, and set $\mathsf D^* (G) = 1 + \sum_{i=1}^r (n_i-1)$. 
 It only takes a few lines to check that
\[
\mathsf D^* (G) \le \mathsf D (G) \,,
\]
and it is well-known that equality holds for $p$-groups, groups of rank $r \le 2$, and others (e.g., \cite[Theorems 5.5.9 and  5.8.3]{Ge-HK06a}, \cite[page 56]{Ge-Ru09}, \cite{Sc11b}). But, there are groups with $\mathsf D^* (G) < \mathsf D (G)$ (see, for example,  \cite{Li20a} for some recent progress). However, so far
there is only one series of groups with $\mathsf D (G) > \mathsf D^* (G)$, for which the precise value of the Davenport constant is known. We summarize this result in the next lemma. 

\smallskip
\begin{lemma}[\cite{Sa-Ch14a}  Theorem 5.8 and Theorem 5.9]\label{regeir}
Let $G = C_2^4 \oplus C_{2k}$ with $k \ge 70$. 
\begin{enumerate}    
\item  We have
       \[
       \mathsf D (G) = \begin{cases}
                        \mathsf D^* (G) = 2k+4 & \ \text{if $k$ is even} \\
                       \mathsf D^*(G)+1=2k+5 & \ \text{if $k$ is odd} \,.
                       \end{cases}
       \]                
    
\item Suppose that $k$ is odd. Then    a sequence $U \in \mathcal F (G)$ is a minimal zero-sum sequence of length $\mathsf D(G)$ if and only if there exists a basis $(e_1,e_2,e_3,e_4,e)$ of $G$, with $\ord (e_2)=2$ for $i\in [1,4]$ and $\ord (e)=2k$, such that $U=e^{2k-3}V_1V_2$, where 
\begin{gather*}
        V_1 = (e_1+e_2+e_3+\frac{k-1}{2}e) \prod_{i=1}^{3}(e_i+\frac{k+1}{2}e) \quad \text{and}  \\
       V_2 = (e_1+e_2+e_3+e_4+\frac{k+1}{2}e) \prod_{i=1}^{3}(e_i+e_4+\frac{k+1}{2}e) \,. 
    \end{gather*}
\end{enumerate}    
\end{lemma}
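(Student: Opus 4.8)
My plan is to treat the value of $\mathsf D(G)$ and the structural statement separately, using the parity of $k$ as the organizing principle, and to reduce everything to cyclic groups via the projection onto the $C_{2k}$-component.

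For the lower bounds, the inequality $\mathsf D(G) \ge \mathsf D^*(G) = 2k+4$ is automatic. When $k$ is odd I would upgrade it to $\mathsf D(G) \ge 2k+5$ by checking directly that the sequence $U = e^{2k-3}V_1V_2$ of part~2 is minimal zero-sum of length $2k+5$. A short computation gives $\sigma(V_1) = e$ and $\sigma(V_2) = 2e$: the order-two coordinates cancel in pairs, while the $e$-coordinates sum to $(2k+1)e = e$ and to $2(k+1)e = 2e$ respectively. Hence $\sigma(U) = (2k-3)e + e + 2e = 2k\,e = 0$ and $|U| = (2k-3)+4+4 = 2k+5$; minimality is then verified by checking that no proper nonempty subsequence vanishes. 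The decisive point is that $\tfrac{k-1}{2}$ and $\tfrac{k+1}{2}$ are integers exactly when $k$ is odd: this is the only place parity enters, and it explains why no such sequence exists for $k$ even, where one must instead prove the matching upper bound $\mathsf D(G) \le 2k+4$.

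For the upper bound I would exploit the projection $\phi\colon G \to C_{2k}$ with kernel $K = C_2^4$. Given a minimal zero-sum sequence $S$ over $G$, I decompose $\phi(S)$ over $C_{2k}$ into minimal zero-sum blocks $W_1,\ldots,W_t$ and a zero-sumfree remainder, lift this to a block decomposition $S = B_1\cdots B_t B_0$, and observe that the block-sums $\sigma(B_1),\ldots,\sigma(B_t)\in K$ form a zero-sum sequence over $C_2^4$ which must be \emph{minimal}: any proper subcollection summing to $0$ in $K$ would yield a proper zero-sum subsequence of $S$, contradicting minimality. Hence the number of blocks is bounded by $\mathsf D(C_2^4)=5$. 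To convert this into the sharp linear bound in $k$, I would invoke the structure theory of long minimal zero-sum sequences over cyclic groups (in the spirit of Savchev--Chen): a minimal zero-sum sequence over $C_{2k}$ of length exceeding $k+1$ is essentially supported on multiples of a single generator with positive representatives summing to $2k$. This forces $S$ to be dominated by a single element $e$ of multiplicity roughly $2k$, leaving only a bounded tail; analyzing that tail pins down both the exact value of $\mathsf D(G)$ and, for $k$ odd, the normal form $e^{2k-3}V_1V_2$.

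The main obstacle is the passage from the crude block bound to the sharp slope-two bound: a priori several blocks could be long, giving a total length linear in $k$ with a much larger slope, so the heart of the proof is to show that $S$ cannot contain more than one long block and that the short part, together with the interaction of the dominant element $e$ with the $C_2^4$-coordinates, is rigid enough to collapse to the stated tail. This rigidity is where the parity of $k$ re-enters, through the value $\tfrac{2k}{2}=k$ and its compatibility with the order-two coordinates, and it is the most delicate and computational part of the argument.
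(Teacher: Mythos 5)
This statement is not proved in the paper at all: it is quoted verbatim from Savchev--Chen \cite{Sa-Ch14a} (Theorems 5.8 and 5.9), a result whose proof occupies a substantial part of that paper. So the relevant question is whether your blind attempt actually establishes it, and it does not. What you do correctly: the verification that $\sigma(V_1)=e$, $\sigma(V_2)=2e$ and hence that $U=e^{2k-3}V_1V_2$ is a zero-sum sequence of length $2k+5$ (minimality still needs a genuine case analysis of which subsets of $\supp(V_1V_2)$ cancel in the $C_2^4$-coordinates, which you only assert), and the observation that for a minimal zero-sum sequence $S$ over $G$ the block sums of any decomposition of $\phi(S)$ into minimal zero-sum blocks over $C_{2k}$ form a minimal zero-sum sequence over $C_2^4$, giving $t\le \mathsf D(C_2^4)=5$ blocks.

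The gaps are essential, not cosmetic. First, the remark that $\tfrac{k\pm1}{2}$ fails to be an integer for even $k$ does not ``explain why no such sequence exists for $k$ even'': the upper bound $\mathsf D(G)\le 2k+4$ for even $k$ (and $\le 2k+5$ for odd $k$) requires a separate proof, and non-existence of one particular construction rules out nothing. Second, your plan to invoke the structure theorem for long minimal zero-sum sequences over $C_{2k}$ cannot get off the ground as stated: with $t\le 5$ blocks and $|S|\ge 2k+4$, pigeonhole only guarantees a block of length about $2k/5$, well below the threshold (length exceeding $k+1$) needed for that theorem, so you have no dominant element $e$ to work with. Third, and most importantly, the inverse statement in part 2 --- that \emph{every} minimal zero-sum sequence of length $\mathsf D(G)$ has exactly the form $e^{2k-3}V_1V_2$ --- is the actual content of the cited theorems, and your sketch explicitly defers it (``the heart of the proof,'' ``the most delicate and computational part''). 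In its present form the proposal is a strategy outline with the decisive steps missing; for the purposes of this paper the correct move is simply to cite \cite{Sa-Ch14a}, as the authors do.
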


\smallskip
\begin{theorem} \label{4.2}
Let $G$ be a finite abelian group with $|G|\geq 3$. Then Property
\begin{itemize}
\item[] {\bf P.}  There are $g_1, g_2 \in G$ and minimal zero-sum sequences $U_1, U_2$ over $G$ such that
                  \[
                  |U_1| = |U_2| =  \mathsf D(G),\hspace{0.1cm} g_1g_2 \hspace{0.1cm}| \hspace{0.1cm}U_1, \hspace{0.2cm} \text{and } \hspace{0.1cm}(g_1 + g_2)\hspace{0.1cm} |\hspace{0.1cm} U_2 \,
                  \]
\end{itemize}
is satisfied in each of the following cases:

\begin{enumerate}
\item[(a)] $G$ is a cyclic group of odd order

\item[(b)]  $G$ is not cyclic and $\mathsf D(G)=\mathsf D^*(G)$

\item[(c)] $G$ has odd order and there is some $U \in \mathcal A (G)$ of length $|U|=\mathsf D(G)$ that is not squarefree

\item [(d)] $G = C_2^4 \oplus C_{2k}$ with   $k \ge  70$.
\end{enumerate}
\end{theorem}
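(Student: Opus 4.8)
The plan is to verify Property {\bf P} in each of the four cases separately, in every instance by exhibiting concrete $g_1,g_2\in G$ together with atoms $U_1,U_2\in\mathcal A(G)$ of length $\mathsf D(G)$ satisfying $g_1g_2\mid U_1$ and $(g_1+g_2)\mid U_2$. One device handles the odd-order cases at once: if $|G|$ is odd, then $\gcd(2,|G|)=1$, so $\theta\colon x\mapsto 2x$ is an automorphism of $G$; it extends to a monoid automorphism of $\mathcal B(G)$ that permutes $\mathcal A(G)$ and preserves lengths, hence sends a maximal atom to a maximal atom. In case (c) we are given an atom $U$ of length $\mathsf D(G)$ that is not squarefree, so $g^2\mid U$ for some $g$; I would put $g_1=g_2=g$ and $U_1=U$, giving $g_1g_2=g^2\mid U_1$, and take $U_2:=\theta(U)\in\mathcal A(G)$, which still has length $\mathsf D(G)$ and satisfies $(2g)^2=\theta(g^2)\mid U_2$, so that $(g_1+g_2)=2g\mid U_2$. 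Case (a) is the special instance $G=C_n$ with $n$ odd: a generator $g$ gives the atom $U=g^n$ of length $\mathsf D(C_n)=n$, which is non-squarefree, so (a) follows from the same argument (equivalently, $2g$ is again a generator and $U_2=(2g)^n$ works).

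For (b) I would use the standard sequence realizing $\mathsf D^*(G)$. Writing $G=C_{n_1}\oplus\cdots\oplus C_{n_r}$ with $r\ge 2$ and a basis $(e_1,\dots,e_r)$, $\ord(e_i)=n_i$, the sequence $U_1=(e_1+\cdots+e_r)\prod_{i=1}^r e_i^{\,n_i-1}$, where $e_1+\cdots+e_r=-\sum_i(n_i-1)e_i$, is a minimal zero-sum sequence of length $\mathsf D^*(G)=\mathsf D(G)$; since $n_1,n_2\ge 2$ we have $e_1e_2\mid U_1$, so I set $g_1=e_1$, $g_2=e_2$. The key point is that $(e_1,\,e_1+e_2,\,e_3,\dots,e_r)$ is again a basis of $G$ with the same multiset of orders (indeed $\ord(e_1+e_2)=\lcm(n_1,n_2)=n_2$), so the analogous product $U_2=(2e_1+e_2+\cdots+e_r)\,e_1^{\,n_1-1}(e_1+e_2)^{\,n_2-1}\prod_{i\ge 3}e_i^{\,n_i-1}$ is an atom of length $\mathsf D(G)$ containing $e_1+e_2=g_1+g_2$. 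That any such product built from a basis is a minimal zero-sum sequence is a short check: any zero-sum subsequence must contain, for each basis element, either none or its full prescribed power.

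Case (d) splits by the parity of $k$. If $k$ is even, Lemma \ref{regeir}(1) gives $\mathsf D(G)=\mathsf D^*(G)$, and since $G=C_2^4\oplus C_{2k}$ is non-cyclic we are reduced to case (b). The delicate subcase is $k$ odd, where $\mathsf D(G)=\mathsf D^*(G)+1$ and I must invoke the explicit classification in Lemma \ref{regeir}(2). Fixing such a basis $(e_1,e_2,e_3,e_4,e)$ and the corresponding atom $U_1=e^{2k-3}V_1V_2$, I would take $g_1=e$ and $g_2=e_1+e_2+e_3+\tfrac{k-1}{2}e$ (the first factor of $V_1$); both divide $U_1$, so $g_1g_2\mid U_1$. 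Then $g_1+g_2=e_1+e_2+e_3+\tfrac{k+1}{2}e$, and the crucial computation is that this element has order $2k$: its $C_2^4$-component $e_1+e_2+e_3$ is nonzero, while $\gcd\!\big(\tfrac{k+1}{2},k\big)=1$ forces its component in the odd part of $C_{2k}$ to be a generator of order $k$; since $k$ is odd these combine, via $\lcm(2,k)=2k$, to give order $2k$. Because $k$ is odd we have $G\cong C_2^5\oplus C_k$, and there every element of order $2k$ generates a cyclic direct summand $\cong C_{2k}$ with complement $\cong C_2^4$; hence $g_1+g_2$ can be used as the distinguished order-$2k$ generator of a new basis, and Lemma \ref{regeir}(2) applied to that basis produces an atom $U_2$ of length $\mathsf D(G)$ in which $g_1+g_2$ occurs with multiplicity $2k-3$, so $(g_1+g_2)\mid U_2$.

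The main obstacle is precisely the $k$-odd part of (d). Inside the rigid sequence $e^{2k-3}V_1V_2$ one must select $g_1,g_2$ whose sum lands on an element of maximal order $2k$, steering clear of the many choices whose sums have order $k$ or order $2$ and therefore cannot appear in any maximal atom; the specific choice $g_2=e_1+e_2+e_3+\tfrac{k-1}{2}e$ is dictated by the requirement $\gcd\!\big(\tfrac{k+1}{2},k\big)=1$, and this coprimality is the real crux. Once it is in place, the remaining order bookkeeping and the translation of ``order $2k$'' into membership in a maximal atom through the splitting $G\cong C_2^5\oplus C_k$ and the direct-summand fact are routine.
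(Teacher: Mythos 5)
Your proposal is correct, and in cases (a), (b), (c) and the even-$k$ part of (d) it is essentially the paper's own proof: the paper likewise takes $U_1=g^n$, $U_2=(2g)^n$ with $g_1=g_2=g$ for (a), the sequence $(e_1+\cdots+e_r)\prod_{i=1}^re_i^{n_i-1}$ together with the automorphism fixing all basis elements except one, which is sent to $e_{r-1}+e_r$, for (b), the doubling automorphism $g\mapsto 2g$ for (c), and the reduction to (b) via Lemma \ref{regeir}.1 for even $k$. The one genuine divergence is in case (d) with $k$ odd. There the paper chooses the same $g_1=e$ and $g_2=e_1+e_2+e_3+\frac{k-1}{2}e$, but obtains $U_2$ as $\varphi(U)$ for the map $\varphi$ fixing $e_1,\dots,e_4$ and sending $e\mapsto e_1+e_2+e_3+\frac{k+1}{2}e$, on the grounds that $(e_1,e_2,e_3,e_4,e_1+e_2+e_3+\frac{k+1}{2}e)$ is again a basis. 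You instead check that $g_1+g_2$ has order $2k=\exp(G)$, invoke the fact that an element of maximal order generates a direct summand (with complement necessarily $\cong C_2^4$), and apply the ``if'' direction of Lemma \ref{regeir}.2 to the resulting basis. Your route is slightly longer but more robust: the paper's tuple generates $G$ only when $\frac{k+1}{2}$ is odd, i.e.\ when $k\equiv 1\pmod 4$; for $k\equiv 3\pmod 4$ one has $\gcd\bigl(\frac{k+1}{2},2k\bigr)=2$, so $\langle e_1,\dots,e_4,\frac{k+1}{2}e\rangle$ has index $2$ in $G$ and the asserted basis property fails, whereas your direct-summand argument treats all odd $k$ uniformly and only needs $\gcd\bigl(\frac{k+1}{2},k\bigr)=1$. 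Both approaches rest on the same classification lemma and the same choice of $g_1,g_2$; yours just replaces the explicit change of basis by the abstract existence of a complement.
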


\smallskip
\noindent
{\it Remark.} The only known groups, which do not satisfy Property {\bf P} are cyclic groups of even order. They will be studied in Section \ref{5}. Moreover, there is no known group of odd order that does not satisfy Condition (c).

\begin{proof} 
(a) Suppose $G$ is cyclic of odd order $n \ge 3$. Let $g \in G$ be any generator. Clearly, $2g \in G$ is also a generator of the group, since the order of $G$ is odd. Now, consider $U_1 = g^n$ and $U_2 = (2g)^n$. Since both $U_1$ and $U_2$ are atoms with length $\mathsf{D}(G)$, our condition holds.

\smallskip
(b)  Suppose that $G \cong C_{n_1} \oplus \ldots \oplus C_{n_r}$ with $1 < n_1 \mid \ldots \mid n_r$ and  that $\mathsf D^*(G)=\mathsf D(G)$. Let   $(e_1,\ldots,e_r)$ be a basis of $G$ with $\ord (e_i)=n_i$ for $i \in [1,r]$ and with $r\geq 2$. Then  the sequence $U=(e_1+e_2+\ldots+e_r)\prod_{i=1}^{r}e_i^{n_i-1}$ has the property that $|U|=\mathsf D(G)$. Moreover, the map $\varphi:G\longrightarrow G$, given by $e_i\mapsto e_i$ for all $i\in [1,r-1]$ and $e_r\mapsto e_r+e_{r-1}$, is an isomorphism. Since $U$ is a minimal zero-sum sequence,  $\varphi(U)$ is as well, and therefore, $|\varphi(U)|=|U|=\mathsf D(G)$ and $e_{r-1}+e_r\in \textup{supp }(\varphi(U))$. If we choose  $U_1=U$ and $U_2=\varphi(U)$, then the condition holds.

\smallskip
(c) Suppose  $G$ has odd order and there is some $U \in \mathcal A (G)$ of length $|U|=\mathsf D(G)$ that is not squarefree. Then the map defined by the multiplication by two, $\varphi:G\longrightarrow G,\hspace{0.2cm} g\mapsto2g$ for every $g\in G$, is an isomorphism. Since $U$ is not squarefree, there exists $g\in G$ such that $g^2\hspace{0.1cm}|\hspace{0.1cm}U$. Then $\varphi(U)$ is a minimal zero-sum sequence of length $|\varphi(U)|=|U|=\mathsf D(G)$ and $2g\in \textup{supp }(\varphi(U))$. Thus, the condition holds with $U_1=U$ and $U_2=\varphi(U)$.

\smallskip
(d) Suppose $G = C_2^4 \oplus C_{2k}$ with $k \ge 70$. If $k$ is even, then $\mathsf D (G) = \mathsf D^* (G)$ by Lemma \ref{regeir}, whence Property {\bf P} holds by Condition (b). 

Now suppose that $k$ is odd. 
Let  $(e_1,e_2,e_3,e_4,e)$ be a  basis of $G$ as in Lemma \ref{regeir}. Then  $U=e^{2k-3}V_1V_2$, where $V_1$ and $V_2$ are as of Lemma \ref{regeir}, is a minimal zero-sum sequence of length $\mathsf D(G)$. One can  see that $(e_1,e_2,e_3,e_4,e_1+e_2+e_3+\frac{k+1}{2}e)$ is also a basis of $G$. Consider the map $\varphi : G\longrightarrow G$ defined by $e_i\mapsto e_i$ for $i\in [1,4]$ and $e\mapsto e_1+e_2+e_3+\frac{k+1}{2}e$. This is an isomorphism, which implies that $\varphi(U)\in \mathcal{A}(G)$ and $|\varphi(U)|=|U|=\mathsf D(G)$. Moreover, we have $e(e_1+e_2+e_3+\frac{k-1}{2}e)\hspace{0.1cm}|\hspace{0.1cm}U$ and $e_1+e_2+e_3+\frac{k+1}{2}e \hspace{0.1cm}|\hspace{0.1cm}\varphi(U)$ as desired.
\end{proof}

\medskip
We end this section by considering a Property {\bf P$^*$} of  finite abelian groups
 and its arithmetic consequences. Theorem \ref{4.3} establishes a result on the catenary degree for elements with maximal elasticity. It is analogous to the result given by Proposition \ref{3.0}, but its validity is quite restricted.  Let $H$ be an atomic monoid and let $a \in H$. Then the basic inequality \eqref{catenary-distance} shows that $\mathsf c (a) \le 3$ implies that $\mathsf L (a)$ is an interval. But the converse is far from being true. Thus, the conclusion of Theorem \ref{4.3} is stronger than that of Theorem \ref{1.1}.
 
Define Property {\bf P$^*$} as
\begin{itemize}
\item[] {\bf P$^*$.}  For every nonzero element $g \in G$, there is some $A_g \in \mathcal A (G)$ with $|A_g|= \mathsf D (G)$ \\ \phantom{{\bf P$^*$.}} \ and $g \in \supp (A_g)$.
\end{itemize}
Clearly, Property {\bf P$^*$} implies Property {\bf P}. To give an example of a group satisfying {\bf P$^*$}, suppose that  $G$ is an elementary $p$ group of rank $r$ and let $g=e_1 \in G \setminus \{0\}$. Then $e_1$ can be extended to a basis, say $(e_1, e_2, \ldots, e_r)$. Then
\[
A_g = (e_1+ \ldots + e_r) \prod_{i=1}^r e_i^{p-1}
\]
is a minimal zero-sum sequence of length $|A_g|= \mathsf D^* (G) = \mathsf D (G)$ and with $g \in \supp (A_g)$. On the other hand, finite cyclic groups satisfy Property {\bf P$^*$} if and only if they are of prime order. Similarly, groups of rank two need not satisfy Property {\bf P$^*$} (\cite[Theorem 5.8.4]{Ge-HK06a}).

\begin{theorem} \label{4.3}
Let $H$ be a transfer Krull monoid over a finite abelian group $G$, and suppose that $G$ satisfies Property {\bf P$^*$}. Then there exists some
$a^* \in H$ such that  all elements 
$a \in H$ with $\rho(\mathsf L(a)) = \rho(H)$ have catenary degree $\mathsf c (aa^*) \le 3$, whence the length
sets  $\mathsf L(a^*a)$ are intervals with elasticity $\rho(H)$.
\end{theorem}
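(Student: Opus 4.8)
The plan is to reduce to $H=\mathcal{B}(G)$ and to exhibit a single universal padding element built from the atoms furnished by Property {\bf P$^*$}, for which every factorization of $A^{*}A$ can be driven, by moves of distance at most $3$, to one distinguished factorization. First I would use \eqref{eqtransfer} together with the behaviour of the catenary degree under the transfer homomorphism $\theta\colon H\to\mathcal{B}(G)$: the catenary degree in the fibres of a transfer homomorphism is at most $2$, so $\mathsf c(a)\le\max\{\mathsf c(\theta(a)),2\}$ (see \cite[Section 3.2]{Ge-HK06a}). Hence it suffices to produce $A^{*}\in\mathcal{B}(G)$ with $\mathsf c(A^{*}A)\le 3$ for every $A\in\mathcal{B}(G)$ satisfying $\rho(\mathsf L(A))=\mathsf D(G)/2$. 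Once this is done, the basic distance inequality \eqref{catenary-distance} forces $\sup\Delta(\mathsf L(A^{*}A))\le 1$, so $\mathsf L(A^{*}A)$ is an interval, and Lemma~\ref{lemmainitial} applied to $A^{*},A\in\mathcal{B}_{\rho}(G)$ gives $\rho(\mathsf L(A^{*}A))=\mathsf D(G)/2$; pulling back through $\theta$ yields the asserted $a^{*}$.

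Next I would construct $A^{*}$. For each $g\in G\setminus\{0\}$ choose, by {\bf P$^*$}, an atom $A_{g}$ with $|A_{g}|=\mathsf D(G)$ and $g\in\supp(A_{g})$. Each product $A_{g}(-A_{g})$ factors both into the two length-$\mathsf D(G)$ atoms $A_{g},-A_{g}$ and into $\mathsf D(G)$ atoms of length $2$, so by Lemma~\ref{l3.2} it lies in $\mathcal{B}_{\rho}(G)$. I set $A^{*}=\prod_{g\in G\setminus\{0\}}\bigl(A_{g}(-A_{g})\bigr)^{N}$ for a large $N$ to be fixed below; then $A^{*}$ and $A^{*}A$ lie in $\mathcal{B}_{\rho}(G)$. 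Two features will be exploited: since every length-$2$ atom has the shape $h(-h)$ and the pairing of $h$ with $-h$ is forced, $A^{*}A$ has a \emph{unique} factorization $z_{\max}$ into length-$2$ atoms, with $|z_{\max}|=\max\mathsf L(A^{*}A)=:l$; and, because $A_{h}$ contains $h$, the atom $h(-h)$ occurs in $A^{*}A$ with multiplicity at least $N$ for every $h\ne 0$.

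The core of the argument is to connect every factorization to the unique $z_{\max}$ by moves of distance at most $3$; since $z_{\max}$ is the only factorization of length $l$, it suffices to prove a \emph{progress lemma}: every factorization $z$ with $|z|<l$ admits a distance-$3$ move to a factorization of length $|z|+1$. One then inducts on $l-|z|$ and joins any two factorizations through $z_{\max}$, whence $\mathsf c(A^{*}A)\le 3$. By the basic distance inequality the smallest length-increasing move has distance $3$ and is a refactorization $W_{1}W_{2}=Z_{1}Z_{2}Z_{3}$ of two atoms of $z$ into three. If $|z|<l$ then $z$ contains an atom $W=h_{1}\cdots h_{s}$ with $s\ge 3$, and I would use the abundant length-$2$ atoms $h(-h)$ present in $A^{*}A$ as tools: pairing $W$ with a suitable $h(-h)$ and detaching one minimal zero-sum block should split $W_{1}W_{2}$ into exactly three atoms.

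The hard part is precisely this progress lemma. The two obstructions are that a long atom paired with a tool may regenerate an atom pair (distance $2$, no progress) or fragment into four or more atoms (distance $\ge 4$), and both must be excluded simultaneously. This is where the full strength of {\bf P$^*$} and of the large exponent $N$ is needed: choosing $N$ larger than $\max\Delta(G)$ and than the number of atoms of $z$ guarantees a surplus of free tools $h(-h)$, while the overlapping supports of the various $A_{g}$ supply, for the chosen element $h_{1}\in\supp(W)$, the complementary material required to detach a single extra atom. That abundance over \emph{all} $g$ is indispensable is already visible for $G=C_{2}^{3}$, where a single block satisfies $\mathsf L\bigl(A_{g}(-A_{g})\bigr)=\{2,\mathsf D(G)\}$, which is not even an interval, whereas a product of blocks for distinct $g$ introduces the intermediate length-$3$ atoms that fill the gap. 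The delicate technical point, and the main obstacle I expect, is to show that the detachment can always be arranged so that the complement of the removed length-$2$ atom factors into exactly two atoms, i.e.\ that the move has distance exactly $3$; verifying this will require the quantitative choice of $N$ together with a case analysis on $\supp(W)$. Granting the progress lemma, the induction reaches $z_{\max}$, so $\mathsf c(A^{*}A)\le 3$, and the reductions of the first paragraph complete the proof.
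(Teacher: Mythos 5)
Your construction of $A^{*}$ is exactly the one the paper uses (the paper takes $N=1$; the extra exponent is unnecessary), and your reductions at either end — passing to $\mathcal{B}(G)$, deducing intervality from \eqref{catenary-distance}, and getting the elasticity from Lemma~\ref{lemmainitial} — match the paper's. But the heart of the matter, that $\mathsf c(A^{*}A)\le 3$, is not established in your write-up. You reduce it to a ``progress lemma'' (every factorization shorter than $z_{\max}$ admits a distance-$3$ refactorization increasing the length by one) and then explicitly concede that the crucial step — arranging the detachment so that the move has distance exactly $3$, neither regenerating two atoms nor fragmenting into four — is ``the main obstacle'' you expect and is left to ``a case analysis on $\supp(W)$'' that you do not carry out. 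That lemma is essentially the entire content of the theorem: it is equivalent to the known result that a zero-sum sequence $B\in\mathcal B(G)$ with $\supp(B)\cup\{0\}=G$ satisfies $\mathsf c(B)\le 3$, which is \cite[Theorem 7.6.8]{Ge-HK06a} and is what the paper invokes at this point. The paper's use of Property {\bf P$^*$} is only to guarantee simultaneously that $\supp(A^{*})\cup\{0\}=G$ (so the cited theorem applies to $A^{*}A$) and that each block $A_g(-A_g)$ has maximal elasticity (so $A^{*}A$ does too, via Lemmas~\ref{l3.2} and~\ref{lemmainitial}); no multiplicity $N$ and no ad hoc combinatorics are needed. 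As it stands, your argument is a correct framing plus an unproven core, so it is not a complete proof; either supply the full case analysis for the progress lemma or cite the full-support catenary bound directly.

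Two smaller remarks. Your claim that the unique maximal-length factorization consists of length-$2$ atoms, and your $C_2^3$ example showing that a single block $A_g(-A_g)$ has length set $\{2,\mathsf D(G)\}$, are both correct and genuinely illustrate why full support is the right hypothesis. Your transfer step quietly assumes the catenary degree in the fibres of $\theta$ is at most $2$; this holds for the canonical transfer homomorphism of a Krull monoid but is not part of the definition of a transfer Krull monoid used here, so (as in the paper itself) the catenary-degree conclusion in $H$, as opposed to the intervality of length sets, needs that extra fibre condition to be justified.
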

  
\begin{proof} 
By \cite[Theorem 3.4.10]{Ge-HK06a}, it suffices to prove the assertion for the monoid $\mathcal{B}(G)$. Consider the following sequence 
\[
A^*=\prod_{g\in G \setminus \{0\}} A_g (-A_g)  \ \quad 
\] 
where the sequences $A_g$ are as in the Definition of Property {\bf P$^*$}.  Lemma \ref{l3.2} follows that the length set of $A^*$ has maximal elasticity.
Let $A \in \mathcal B (G)$ be such that $\rho ( \mathsf L (A)) = \mathsf D (G)/2$. Then $\mathsf \rho (\mathsf L(A^*A))=\mathsf D(G)/2$ by Lemma \ref{lemmainitial}.
Since $\textup{supp }(A^*)\cup \{0\}=G$,  we get $\textup{supp }(A^*A)\cup \{0\}=G$. Thus, \cite[Theorem 7.6.8]{Ge-HK06a} implies  that $\mathsf c(A^*A)\leq 3$, and consequently,  the length set $\mathsf L(A^*A)$ forms an interval. It has elasticity $\rho (H)$ by Lemma \ref{lemmainitial}.
\end{proof}

\section{On cyclic groups of even order: Proof of Theorem \ref{secondmaintheorem}}\label{5}

\smallskip

In this section, we prove Theorem \ref{secondmaintheorem}.

\begin{lemma}[\cite{Ge-Ru09}, Theorem 5.3.1]\label{lemma89}
    Let $G$ be a cyclic group of order $n\geq 2$. Then for every $k\in \N$ we have $\rho_{2k} (\mathcal B (G))=kn$ and $\rho_{2k+1}(\mathcal{B}(G))=kn+1$.

\end{lemma}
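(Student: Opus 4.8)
The plan is to prove matching lower and upper bounds for each parity of the index, directly in the monoid $\mathcal B(C_n)$. Fix a generator $g$ of $C_n$; then $-g$ is again a generator, so $g^n$ and $(-g)^n$ are minimal zero-sum sequences of length $\mathsf D(C_n)=n$, while $g(-g)$ is a minimal zero-sum sequence of length $2$. I expect both lower bounds and the even upper bound to be routine, and the odd upper bound to be the real content.

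For the lower bounds I would exhibit explicit sequences. For the even index, take $B=g^{kn}(-g)^{kn}$: grouping as $(g^n)^k((-g)^n)^k$ gives a factorization of length $2k$, and grouping as $(g(-g))^{kn}$ gives one of length $kn$, so $\{2k,kn\}\subset\mathsf L(B)$ and $\rho_{2k}(\mathcal B(C_n))\ge kn$. For the odd index, take $B=g^{kn+1}(-g)^{kn+1}$: now $(g^n)^k((-g)^n)^k(g(-g))$ has length $2k+1$ and $(g(-g))^{kn+1}$ has length $kn+1$, whence $\rho_{2k+1}(\mathcal B(C_n))\ge kn+1$.

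For the even upper bound I would invoke $\rho(\mathcal B(C_n))=\mathsf D(C_n)/2=n/2$: if $2k\in\mathsf L(B)$ then $\min\mathsf L(B)\le 2k$ and $\max\mathsf L(B)\le (n/2)\min\mathsf L(B)\le kn$, giving $\rho_{2k}=kn$. The same estimate nearly settles the odd index: if $2k+1\in\mathsf L(B)$ and $\min\mathsf L(B)\le 2k$, then $\max\mathsf L(B)\le (n/2)(2k)=kn\le kn+1$, so I may assume $\min\mathsf L(B)=2k+1$. Peeling off the $\mathsf v_0(B)$ copies of the atom $0$ reduces matters to zero-free $B$ (each such atom shifts every length by $1$, and the resulting bound is never weakened because $n\ge 2$); then every atom has length $\ge 2$, so $\max\mathsf L(B)\le\lfloor |B|/2\rfloor$, and since $|B|\le(2k+1)n$ this already gives $\max\mathsf L(B)\le kn+1$ whenever $|B|\le 2kn+3$.

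The remaining case, $|B|\ge 2kn+4$, is where the difficulty lies. Here, assuming toward a contradiction that $\max\mathsf L(B)\ge kn+2$, a counting argument (the total excess $|B|-2\max\mathsf L(B)$ is at most $n-4$) shows that a longest factorization has at most $n-4$ atoms of length $\ge 3$; hence $B$ differs from a product of complementary pairs $\{h,-h\}$ by a subsequence of length bounded in terms of $n$ alone, i.e. $\sum_{h}|\mathsf v_h(B)-\mathsf v_{-h}(B)|$ is bounded independently of $k$. The main obstacle is to turn this near-symmetry into a contradiction with $\min\mathsf L(B)=2k+1$: I would argue that a nearly symmetric zero-sum sequence over $C_n$ cannot simultaneously admit a minimal factorization of odd length and a factorization into $kn+2$ atoms. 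The underlying mechanism is a parity phenomenon — complementary pairs match $h$ with $-h$, and the only possible self-paired value $n/2$ (when $n$ is even) fails to be a generator once $n\ge 3$ — which is precisely what forces an odd-length configuration to resist being split into pairs. Carrying out this parity bookkeeping cleanly is the crux; everything else is elementary.
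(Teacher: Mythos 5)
The paper itself offers no proof of this lemma---it is imported verbatim from \cite{Ge-Ru09}, Theorem 5.3.1---so the only question is whether your argument stands on its own. Your lower bounds, your even upper bound via $\rho(\mathcal B(C_n))=n/2$, and your reduction of the odd upper bound to the case of a zero-free $B$ with $\min\mathsf L(B)=2k+1$ and $|B|\ge 2kn+4$ are all correct. But that last case carries the entire content of the statement, and you do not prove it: you derive that at most $n-4$ atoms in a longest factorization have length $\ge 3$ (so $B$ agrees with a symmetric sequence, one with $\mathsf v_h(B)=\mathsf v_{-h}(B)$ for all $h$, up to a subsequence of length at most $3(n-4)$), and then appeal to an unexecuted ``parity phenomenon''---which you yourself flag as the crux.

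The reason this is a genuine gap rather than routine bookkeeping: the parity argument is airtight only when both defects vanish. If every $V_j$ is a pair $h(-h)$ and every $U_i$ equals $g_i^n$ for a generator $g_i$ (which is forced when $n=4$, since then $2kn+4\le 2\max\mathsf L(B)\le|B|\le(2k+1)n=2kn+4$), then $B=-B$ makes the $2k+1$ blocks $g_i^n$ pair off as $g^n\leftrightarrow(-g)^n$, contradicting oddness. For $n\ge 5$, however, the shortest factorization may contain up to $n-4$ atoms of length $<n$ and the longest one up to $n-4$ non-pairs, and these error terms can absorb an entire block's worth of asymmetry: the sequence $B=g^{(k+1)n}(-g)^{kn}$ has $\min\mathsf L(B)=2k+1$, asymmetry exactly $n$ (comfortably below your bound $3(n-4)$ once $n\ge 6$), and $\max\mathsf L(B)=kn+1$. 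Excluding $\max\mathsf L(B)=kn+2$ therefore requires a quantitative argument tying the size of the asymmetric part to the excess $\max\mathsf L(B)-kn$, not merely a parity observation; this is precisely where the cited proof does its real combinatorial work. As written, the odd upper bound---the one nontrivial assertion of the lemma---is not established.
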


\begin{lemma}\label{Lemma342}
Let $G$ be a cyclic group of order $n \ge 3$. Then any sequence $A\in \mathcal{B}(G)$ with maximal elasticity is of the form:
\[A=\prod_{g\in G_0}(g^n(-g)^n)^{n_g}
\]
where $G_0=\{g\in G: \ord(g)=n\}$ and $n_g\in \N_0$.
\end{lemma}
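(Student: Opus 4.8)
The plan is to read off the structure of $A$ from the two dual factorizations that maximal elasticity forces. Since $G$ is cyclic of order $n$, we have $\mathsf D(G)=n$ and $\rho(\mathcal B(G))=\mathsf D(G)/2=n/2$, so the hypothesis $\rho(\mathsf L(A))=\rho(\mathcal B(G))$ means precisely that $A$ has maximal elasticity in the sense of Lemma \ref{l3.2}. Applying that lemma, I obtain two factorizations $A=U_1\cdot\ldots\cdot U_k=V_1\cdot\ldots\cdot V_l$, where every $U_i$ is an atom of length $\mathsf D(G)=n$ and every $V_j$ is an atom of length $2$. I would then analyze each factorization separately for the information it gives about the multiplicities $\mathsf v_g(A)$, and intersect the two conclusions.

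First I would pin down the long atoms. It is classical (see \cite{Ge-HK06a}) that over a cyclic group $C_n$ the only minimal zero-sum sequences of maximal length $\mathsf D(C_n)=n$ are the sequences $g^n$ with $\ord(g)=n$; equivalently, deleting one entry from such an atom leaves a maximal zero-sumfree sequence, which over $C_n$ must be $g^{n-1}$ for a generator $g$. Hence each $U_i=g_i^{\,n}$ with $g_i\in G_0$, and therefore $\supp(A)\subseteq G_0$ while $n\mid \mathsf v_g(A)$ for every $g\in G$; I write $\mathsf v_g(A)=n\,a_g$ with $a_g\in\N_0$. Next I would use the length-$2$ factorization: every atom of length $2$ has the form $h(-h)$ with $h\neq 0$, and such a factor contributes equally to the multiplicity of $h$ and of $-h$. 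Summing over $V_1,\ldots,V_l$ gives $\mathsf v_g(A)=\mathsf v_{-g}(A)$ for all $g\in G$, and hence $a_g=a_{-g}$.

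It remains to assemble these facts. Because $n\ge 3$, every $g\in G_0$ has order $n>2$, so $g\neq -g$ and $G_0$ partitions into two-element pairs $\{g,-g\}$. Grouping the factorization $A=\prod_{g\in G_0}g^{\,n a_g}$ according to these pairs and using $a_g=a_{-g}$ yields $A=\prod\bigl(g^n(-g)^n\bigr)^{a_g}$, the product running over a transversal of the pairs. Assigning the exponent to one representative of each pair (and $0$ to the other) then puts $A$ in the asserted form $\prod_{g\in G_0}\bigl(g^n(-g)^n\bigr)^{n_g}$ with $n_g\in\N_0$.

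The only nontrivial input is the characterization of the maximal-length atoms as $g^n$; the remainder is bookkeeping on the valuations $\mathsf v_g$. This is where I expect the main obstacle to lie if one wants a self-contained argument: the partial sums of a length-$n$ atom are easily seen to be pairwise distinct and hence to exhaust $G$, but upgrading this to "all entries are equal to a single generator" is exactly the index-one property of maximal zero-sum sequences over cyclic groups, which I would simply cite rather than reprove.
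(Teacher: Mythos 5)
Your proof is correct and follows essentially the same route as the paper: both rest on Lemma \ref{l3.2} for the two dual factorizations and on the classical fact (\cite[Lemma 1.4.9]{Ge-HK06a}) that the atoms of length $\mathsf D(C_n)=n$ are exactly $g^n$ with $\ord(g)=n$. The only divergence is that the paper first invokes Lemma \ref{lemma89} to conclude that the number of long atoms is even and then pairs each $U_i$ with $-U_i$, whereas you extract the symmetry $\mathsf v_g(A)=\mathsf v_{-g}(A)$ directly from the length-two factorization and do the pairing at the level of valuations, which renders that extra input unnecessary.
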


\begin{proof} 
Let $A\in \mathcal{B}(G)$ and suppose that $\rho(\mathsf L(A))=\mathsf D(G)/2$. Then, by Lemma \ref{l3.2}, there exist atoms $U_1,.\hspace{0.1cm}.\hspace{0.1cm}.\hspace{0.1cm},U_m,\hspace{0.1cm}V_1,.\hspace{0.1cm}.\hspace{0.1cm}.\hspace{0.1cm},V_l\in \mathcal{A}(G)$ such that  $A=U_1\cdot\ldots \cdot U_m=V_1\cdot\ldots \cdot V_l$,   where $|U_i|=\mathsf D(G)$ and $|V_j|=2$ for all $i\in [1,m]$ and $j\in [1,l]$.  Lemma \ref{lemma89} ensures that $m$ is even,  say $m=2k$ for some $k\in \N$. By applying \cite[Lemma 1.4.9]{Ge-HK06a}, we obtain that $U_i=g_i^n$,  where $g_i\in G_0$ for all $i\in [1,k]$. Thus, we conclude that the sequence $A\in \mathcal{B}(G)$ with maximal elasticity is equal to 
$\prod_{i=1}^{k}U_i(-U_i)=\prod_{i=1}^kg_i^n(-g_i)^n$ (after reindexing if necessary).
\end{proof}

\begin{proof}[Proof of Theorem \ref{secondmaintheorem}] 
Let $H$ be a transfer Krull monoid over a cyclic group $G$ of even order $n \ge 4$. By \eqref{eqtransfer}, it is sufficient to prove the assertion  for the monoid $H=\mathcal{B}(G)$.

1. Take a sequence  $A\in \mathcal{B}(G)$ with maximal elasticity. Then by Lemma \ref{Lemma342}, the sequence $A$ is of the form: 
\begin{center}
    $A=\prod_{g\in G_0} \big(g^n(-g)^n \big)^{n_g}$
\end{center}
where $G_0=\{g\in G: \text{ord}(g)=n\}$. Now, we claim that $\max\mathsf L(A)-1\notin \mathsf L(A)$.
Let $N=\max\mathsf L(A)=\sum_{g\in G_0}nn_g$. Assume that  $N-1\in \mathsf L(A)$, which means that
    $A=\prod_{i=1}^{N-1}V_i$
for some atoms $V_1,\ldots ,V_{N-1}\in \mathcal{A}(G_0)$. Thus we get $2N=|A|=\sum_{i=1}^{N-1}|V_i|$. Note that $2\leq |V_i|\leq n$ for all $i\in[1,N-1]$. Therefore, we have two cases: 

\textit{Case 1:} Among the sets $|V_i|$, exactly one has cardinality four, while all others have cardinality two. Without loss of generality, assume $|V_i| = 2$ for all $i \in [1, N-2]$ and $|V_{N-1}| = 4$. This configuration is impossible, as having $|V_{N-1}| = 4$ implies $V_{N-1} = g (-g) h (-h)$ for some $g, h \in G_0$, which does not constitute an atom.

\textit{Case 2:}  
Exactly two of the $|V_i|$ values are equal to three, while all remaining values are equal to two. Without loss of generality, we assume $|V_i| = 2$ for all $i \in [1, N-3]$, and $|V_{N-2}| = |V_{N-1}| = 3$. Then for a generator $g\in G_0$, there exist $a, b, c \in [1,n]$  such that $ag+bg+cg=0$ and $ag,bg,cg\in G_0$, that is, $\textup{gcd}(abc,n)=1$ and $n\hspace{0.1cm}|\hspace{0.1cm}a+b+c$. But since $n$ is even while $a+b+c$ is an odd number, this is not possible.

2.  Assume that $n+1\notin \mathbb{P}$. Then there exist $a,b\geq 3$ such that $n+1=ab$. We consider the zero-sum sequence
\[
A'=g^n(ag)^n= \big(g(ag)^{n-b} \big) \big(g^{n-a}(ag) \big) \big(g^{a-1}(ag)^{b-1} \big) \,,
\]
which has the property that $\min\mathsf L((-A')A')+1\in \mathsf L((-A')A')$. Observe that the zero-sum sequence $(-A')A'$ has maximal elasticity. Thus, for any $k\in \N$ we have 
\[
[\min \mathsf L \big(((-A')A')^k \big),
\hspace{0.1cm}\min\mathsf L \big(((-A')A')^k \big)+k]\subset \mathsf L \big(((-A')A')^k \big) \,.
\]  
On the other hand, Proposition \ref{xy5.3} provides that there exists $M>0$ such that every  $L\in \mathcal{L}(G)$ is an AAMP with some difference $d$ and bound $M$.
 Let $l=M+\max \Delta(G)$, and choose $A^*=((-A')A')^l$. Then we have that $\mathsf L(A^*)$ is an AAMP with bound $M$ and $ [\min\mathsf L(A^*), \textup{ min }\mathsf L(A^*)+l]\subset \mathsf L(A^*)$. Therefore $\mathsf L(A^*)\cap
 [\min \mathsf L(A^*),\hspace{0.1cm} \max \mathsf L(A^*)-M ] $ forms an  interval. Let  $A\in \mathcal{B}(G)$ with maximal elasticity. Then  $\mathsf L(A^*A)$ has elasticity $\mathsf D (G)/2$ by Lemma \ref{lemmainitial}. Again, since $\mathsf L(A^*A)$ is an AAMP and since 
 \[
 \mathsf L(A^*A)\supset \mathsf L(A^*)+\mathsf L(A)\supset [\min \mathsf L(A^*)+\min \mathsf L(A), \hspace{0.1cm}\min \mathsf L(A^*)+\min \mathsf L(A)+l ] \,,
 \]
 we infer that $\mathsf L(A^* A)\cap
 [\min \mathsf L(A^*A),\hspace{0.1cm} \max \mathsf L(A^*A)-M ] $ forms an  interval. 
\end{proof}

\smallskip
\begin{remark} \label{5.4}
Let $G$ be a cyclic group of order $2n$ with $n \ge 2$ and suppose that $2n+1$ is a prime.

1. Let $2n \in \{4,6,10\}$. Then \cite[Theorem 3.5]{Ge-Zh18a} states that $1\notin \Delta_{\rho}(G)$. We claim that this implies that  $\min \mathsf L (A)+1 \notin \mathsf L (A)$ for every sequences $A\in \mathcal{B}(G)$ with maximal elasticity. Suppose, seeking a contradiction, that there exists a sequence $A \in \mathcal{B}(G)$ with maximal elasticity such that $\min \mathsf{L}(A) + 1 \in \mathsf{L}(A)$. By applying the same argument as in the proof of Theorem \ref{secondmaintheorem}, we deduce that there exists $M\in \N$ such that $ \mathsf{L}(A^*_k) \cap [\min \mathsf{L}(A^*_k), \max \mathsf{L}(A^*_k) - M]
$
is an interval for all sufficiently large $k$, where $A^*_k=A^k$. This is equivalent to asserting that $1 \in \Delta_{\rho}(G)$, which leads to a contradiction.

2. Let $2n \in \{12,16, 18, 22, 30, 36\}$ and let $A\in \mathcal{B}(G)$ be such that $\rho(\mathsf L(A))=\mathsf D(G)/2=n$. Then there exist atoms $U_1, \ldots ,U_k\in \mathcal{A}(G) $ with $|U_i|=\mathsf D(G)$ for all $i\in [1,k]$ such that $A=\prod_{i=1}^{k}U_i$. Observe that $\textup{min }\mathsf L(A)=k$. Assume that $k+1\in \mathsf L(A)$. Then there exist atoms $V_1,\ldots, V_{k+1}\in \mathcal{A}(G)$ such that $\prod_{i=1}^{k+1}V_i=\prod_{i=1}^{k}U_i$. By performing suitable cancellations, we obtain $\prod_{i=1}^{k'+1}V'_i=\prod_{i=1}^{k'}U'_i$, where $\{V'_1,\ldots ,V'_{k'+1}\}\subset \{V_1,\ldots ,V_{k+1}\} $ and  $\{U'_1,\ldots ,U'_{k'}\}\subset \{U_1,\ldots ,U_{k}\} $. A computer program has shown that the equation $$\prod_{i=1}^{k'+1}V'_i=\prod_{i=1}^{k'}U'_i$$ does not have a solution for $k'\leq 3$. (We refer to \url{https://zenodo.org/records/15676314} for further details).  
\end{remark}

We end with the following open question.

\begin{open-problem} \label{5.5}
Let $G$ be a cyclic group of even order and suppose that $|G|+1$ is a prime. Does there exist some $A \in \mathcal B (G)$ with $\rho \big( \mathsf L (A) \big) = \mathsf D (G)/2$ and $1+\min \mathsf L (A) \in \mathsf L (A)$?
\end{open-problem}

\medskip
\noindent
{\bf Acknowledgment.} I want to thank Alfred Geroldinger and Qinghai Zhong for their support and guidance, which were helpful in the completion of this work. I also thank Benjamin Hackl for his support in verifying the computations mentioned in Remark \ref{5.4}.

\medskip
\noindent
\textbf{Conflict of interest statement.} The author states that there is no conflict of interest.

\providecommand{\bysame}{\leavevmode\hbox to3em{\hrulefill}\thinspace}
\providecommand{\MR}{\relax\ifhmode\unskip\space\fi MR }
\providecommand{\MRhref}[2]{%
  \href{http://www.ams.org/mathscinet-getitem?mr=#1}{#2}
}
\providecommand{\href}[2]{#2}



\end{document}